	\newtheorem{dfn}{Definition}[section]
	\newtheorem{prop}[dfn]{Proposition}
	\newtheorem{lem}[dfn]{Lemma}
	\newtheorem{ex}[dfn]{Example}
	\newtheorem{claim}[dfn]{Claim}
	\newtheorem{ack}{Acknowledgements\!\!}
	\newtheorem{mo}[ack]{Exercise\!\!}
	\numberwithin{equation}{section}
	\newcommand{\dist}{\mathop{\mathit{d}} \nolimits}
	\newcommand{\diam}{\mathop{\mathrm{diam}} \nolimits}
	\newcommand{\ric}{\mathop{\mathit{Ric}}  \nolimits}
	\newcommand{\card}{\mathop{\mathrm{Card}} \nolimits}
	\newcommand{\lip}{\mathop{\mathcal{L}ip}      \nolimits}
	\newcommand{\me}{\mathop{\mathrm{me}}      \nolimits}
	\newcommand{\supp}{\mathop{\mathrm{Supp}}    \nolimits}
	\newcommand{\rlip}{\mathop{\mathrm{Lip}}     \nolimits}
	\newcommand{\oblip}{\mathop{\stackrel{\rlip_1}{\longrightarrow}}    \nolimits}
	\newcommand{\obd}{\mathop{\underline{H}_{\lambda}\mathcal{L}\iota_1}    \nolimits}
	\newcommand{\bobd}{\mathop{H_{\lambda}\mathcal{L}\iota_1}
	\nolimits} 
	\newcommand{\sikaku}{\mathop{\underline{\square}_{\lambda}}                 \nolimits}
	\newcommand{\sikakuu}{\mathop{\underline{\square}_{1}}                 \nolimits}
	\newcommand{\bounasi}{\mathop{\square_{\lambda}}    \nolimits}
	\newcommand{\vol}{\mathop{\mathit{vol}}        \nolimits}
\begin{document}

	\title[Estimates of Gromov's box distance]
	{Estimates of Gromov's box distance}
	\author[Kei Funano]{Kei Funano}
	\address{Mathematical Institute, Tohoku University, Sendai 980-8578, JAPAN}
	\email{sa4m23@math.tohoku.ac.jp}
	\subjclass[2000]{28E99, 53C23}
	\keywords{mm-space, box distance function, observable distance function}
	\dedicatory{}
	\date{\today}

	\maketitle

	\setlength{\baselineskip}{5mm}

	\begin{abstract}In $1999$, M. Gromov introduced the box distance
	 function $\sikaku$ on the space of all mm-spaces. In this paper, by using the method of
	 T. H. Colding (cf.~\cite[Lemma $5.10$]{Colding}), we estimate $\sikaku
	 (\mathbb{S}^n,\mathbb{S}^m)$ and $\sikaku (\mathbb{C}P^n,
	 \mathbb{C}P^m)$, where $\mathbb{S}^n$ is the $n$-dimensional
	 unit sphere in
	 $\mathbb{R}^{n+1}$ and $\mathbb{C}P^n$ is the $n$-dimensional complex projective space equipped with the Fubini-Study metric. In paticular,
	 we give the complete answer to an Exercise of Gromov's
	Green book (cf.~\cite[Section $3\frac{1}{2}.18$]{gromov}). We
	 also estimate $\sikaku \big(SO(n), SO(m)\big)$ from
	 below, where $SO(n)$ is the special orthogonal group. 
	\end{abstract}

	\section{Introduction}
	In $1999$, M. Gromov developed the theory of mm-spaces in
	\cite[Chapter $3\frac{1}{2}_{+}$]{gromov} by introducing two
	distance functions, called the \emph{box distance function}
	$\sikaku$ and the \emph{observable distance function} $\obd$, on the space $\mathcal{X}$ of all
	isomorphic class of mm-spaces. Here, an \emph{mm-space} is a
	triple $(X, \dist_X, \mu_X)$, where $\dist_X$ is a complete separable
	metric on a set $X$ and $\mu_X$ a finite Borel measure on $(X,
	\dist_X)$. 
	The notion of the distance function $\sikaku$ is considered as a
	natural extension of the Gromov-Hausdorff
	distance function to the space $\mathcal{X}$. On the other hand, the notion
	of the distance function $\obd$ is related to measure
	concentration. Roughly speaking, ``measure concentration''
	amounts to saying that the push-forward measures
	$f_{n \ast}(\mu_n)$ on $\mathbb{R}$ concentrate to a point
	for any sequence of $1$-Lipschitz functions $f_n:(X_n, \dist_n ,
	\mu_n)\to \mathbb{R}$. For instance, the unit
	spheres in Euclidean spaces $\{ \mathbb{S}^n
	\}_{n=1}^{\infty}$, the complex projective spaces $\{
	\mathbb{C}P^n   \}_{n=1}^{\infty}$ equipped with the
	Fubini-Study metrics, and the special orthogonal
	groups $\{
	SO(n)\}_{n=1}^{\infty}$ have that property. He defined the
	distance $\obd(X,Y)$ by using the
	Hausdorff distance between the space of $1$-Lipschitz
	functions on $X$ and that on $Y$, and showed that a
	sequence $\{  X_n    \}_{n=1}^{\infty}$ of mm-spaces
	concentrates if and only if the sequence $\{  X_n   \}_{n=1}^{\infty}$
	converges to a one-point space with respect to the distance
	function $\obd$.

	The topology on $\mathcal{X}$
	determined by $\sikaku$ is strictly stronger than that of
	$\obd$. In fact, the sequences $\{ \mathbb{S}^n
	\}_{n=1}^{\infty}$, $\{\mathbb{C}P^n\}_{n=1}^{\infty}$, and $\{
	SO(n)\}_{n=1}^{\infty}$ are all divergent with respect to the
	distance $\sikaku$ (see Proposition \ref{kyou}). This is related
	to the following exercise in Gromov's book:
	\begin{mo}[{cf.~\cite[Section
	 $3\frac{1}{2}.18$]{gromov}}]Estimate the distance $\sikaku (\mathbb{S}^n,
	 \mathbb{S}^m)$.
	\end{mo}
	To solve the exercise, applying a method of \cite[Lemma $5.10$]{Colding}, we will estimate $\sikaku(M,N)$ from below for compact Riemannian
	manifolds $M$ and $N$ with positive Ricci curvatures and the volume measures
	satisfying a homogenuity condition (see Lemma \ref{hyouka}). As a result,
	we get the following proposition:
	\begin{prop}\label{kaotan}Assume that two sequences $\{ n_k
	   \}_{k=1}^{\infty}$, $\{ m_k  \}_{k=1}^{\infty}$ of natural
	   numbers satisfy $n_k \leq C_1 k, m_k\leq C_2 k$ and $|n_k
	   -m_k|\geq C_3 k$, $k=1,2, \cdots$, for some positive
	   constants $C_1,C_2,C_3$. Then, we have
	   \begin{align*}
	    \liminf_{k \to \infty} \sikakuu (\mathbb{S}^{n_k},
	    \mathbb{S}^{m_k}),  \liminf_{k \to \infty} \sikakuu
	    (\mathbb{C}P^{n_k}, \mathbb{C}P^{m_k})\geq \  \min \Big\{
	    2^{-\frac{C_1}{C_3}}\pi^{- \frac{C_2}{C_3}},
	    2^{-\frac{C_2}{C_3}}\pi^{-\frac{C_1}{C_3}}  \Big\}.   
	    \end{align*}In paticular, if in addition $|n_k-m_k|\geq C_4
	   k^{\alpha},\ k=1,2, \cdots$, holds for some constant $C_4>0$
	   and a number $\alpha > 1$, then we have
	   \begin{align*}
	    \lim_{k\to \infty}\sikakuu (\mathbb{S}^{n_k},
	    \mathbb{S}^{m_k}), \lim_{k\to \infty}\sikakuu
	    (\mathbb{C}P^{n_k}, \mathbb{C}P^{m_k})=1.
	    \end{align*}
	 \end{prop}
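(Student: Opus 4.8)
The plan is to derive both assertions from the explicit lower bound supplied by Lemma \ref{hyouka}, so that the proposition itself becomes a routine asymptotic estimate. First I would check that each pair $(\mathbb{S}^{n_k},\mathbb{S}^{m_k})$ and $(\mathbb{C}P^{n_k},\mathbb{C}P^{m_k})$ falls under that lemma: all of these are compact, carry positive Ricci curvature, and their normalized volume measures are homogeneous. Applying the lemma then yields a lower bound for $\sikakuu$ in terms of the normalizing integrals of the distance-from-a-point densities. For the sphere this density is proportional to $\sin^{n-1}$ on $[0,\pi]$ with normalizer $I_n=\int_0^\pi \sin^{n-1}t\,dt$, and I expect the lemma's quantity to collapse to a power with exponent $1/|n-m|$, whose base splits into a factor $2$ (from the concentration at the median $t=\pi/2$) and a factor $\pi$ (from the diameter). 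Concretely, the target is a bound
\[
\sikakuu(\mathbb{S}^{n},\mathbb{S}^{m})\ \geq\ 2^{-n/|n-m|}\,\pi^{-m/|n-m|}\ =\ \bigl(2^{n}\pi^{m}\bigr)^{-1/|n-m|},
\]
together with its $n\leftrightarrow m$ transpose, and an analogous bound for $\mathbb{C}P^n$ obtained from the Fubini--Study volume density.

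Granting such a bound, the first display follows by pure monotonicity. Writing $a_k:=n_k/|n_k-m_k|$ and $b_k:=m_k/|n_k-m_k|$, the hypotheses $n_k\leq C_1k$, $m_k\leq C_2k$, $|n_k-m_k|\geq C_3k$ give $a_k\leq C_1/C_3$ and $b_k\leq C_2/C_3$ for every $k$. Since $2,\pi>1$, the maps $x\mapsto 2^{-x}$ and $x\mapsto\pi^{-x}$ are decreasing, so $2^{-a_k}\pi^{-b_k}\geq 2^{-C_1/C_3}\pi^{-C_2/C_3}$ and the transposed expression is $\geq 2^{-C_2/C_3}\pi^{-C_1/C_3}$. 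Because the sign of $n_k-m_k$ is not prescribed along the sequence, I would pass to the minimum of the two expressions, obtaining $\sikakuu(\mathbb{S}^{n_k},\mathbb{S}^{m_k})\geq\min\{2^{-C_1/C_3}\pi^{-C_2/C_3},\,2^{-C_2/C_3}\pi^{-C_1/C_3}\}$ for each $k$; the asserted $\liminf$ inequality is then immediate, and the same argument applies verbatim to $\mathbb{C}P^n$.

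For the second display I would use a squeeze. If $|n_k-m_k|\geq C_4k^{\alpha}$ with $\alpha>1$, then $a_k\leq (C_1/C_4)k^{1-\alpha}\to 0$ and likewise $b_k\to 0$, so the lower bound tends to $2^{0}\pi^{0}=1$. Combined with the trivial upper bound $\sikakuu\leq 1$ valid for probability mm-spaces, this forces $\lim_{k\to\infty}\sikakuu(\mathbb{S}^{n_k},\mathbb{S}^{m_k})=1$, and similarly for $\mathbb{C}P^n$.

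The main obstacle lies entirely inside Lemma \ref{hyouka}: isolating the exact bases $2$ and $\pi$ through Colding's method requires the precise asymptotics of the ratio of the normalizing integrals $I_n$, together with the verification that the controlling exponent is exactly $1/|n-m|$. The $\mathbb{C}P^n$ case is the delicate bookkeeping point, since the real dimension $2n$ and the Fubini--Study diameter $\pi/2$ must be tracked carefully in order to reproduce the same two bases; I would treat it in parallel with the spherical computation. Once the bound carries these bases and this exponent, the remainder is the elementary monotonicity-and-squeeze argument above, the only subtlety being the use of the minimum to absorb both orderings of $n_k$ and $m_k$.
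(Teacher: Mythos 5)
Your skeleton matches the paper's route exactly: reduce to Lemma \ref{hyouka}, take $|n_k-m_k|$-th (hence at most $(C_3k)$-th) roots so that the bases $2$ and $\pi$ survive with exponents $C_1/C_3$ and $C_2/C_3$, symmetrize with a minimum because the ordering of $n_k$ and $m_k$ is not prescribed, and obtain the limit statement by squeezing against the trivial bound $\sikakuu\leq 1$ (the paper records this as $\diam(\mathcal{X}_1,\sikakuu)=1$). All of that outer bookkeeping, including $a_k=n_k/|n_k-m_k|\leq C_1/C_3$ and $b_k\leq C_2/C_3$, is correct.

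The gap is that you never actually apply Lemma \ref{hyouka}: you ``grant'' the clean pointwise bound $\sikakuu(\mathbb{S}^{n},\mathbb{S}^{m})\geq\bigl(2^{n}\pi^{m}\bigr)^{-1/|n-m|}$ and relegate its derivation ``inside Lemma \ref{hyouka}.'' But the lemma as stated does not hand you this: it asserts $\sikakuu(M,N)\geq c$ only for those $c$ satisfying $c^{n-m}\leq(1-c)\,n\,a_N\,\kappa_1^{m/2}\,\Gamma\bigl(\tfrac{m+1}{2}\bigr)\Gamma\bigl(\tfrac{n}{2}\bigr)\big/\bigl(m\,2^{n+1}\pi^{m-1}\Gamma\bigl(\tfrac{m}{2}\bigr)\Gamma\bigl(\tfrac{n+1}{2}\bigr)\bigr)$, so extracting the bases $2$ and $\pi$ still requires handling the Gamma-factor and $(1-c)$ corrections --- and that computation \emph{is} the paper's proof of the proposition. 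The paper does not establish your per-$k$ clean bound either; it takes $c$ equal to $2^{-C_1/C_3}\pi^{-C_2/C_3}$ multiplied by the $k$-dependent correction $\bigl\{(1-c)\,n_k\Gamma(n_k/2)/(m_k\Gamma((n_k+1)/2))\bigr\}^{1/(C_3k)}$ and observes that this tends to $1$, which is all the liminf needs. For $\mathbb{C}P^{n}$ your posited exact bound is genuinely doubtful pointwise: there $a_{\mathbb{C}P^{n}}=\Gamma\bigl(n+\tfrac12\bigr)/(2\sqrt{\pi}\,n!)$ contributes a factor of order $1/(2\sqrt{\pi}\,m_k)$ that cannot be absorbed into the bases $2$ and $\pi$ and only vanishes after the $(2C_3k)$-th root; the correct substitution is the real dimension ($n:=2n_k$, $m:=2m_k$, with the factors of $2$ cancelling in the exponent), and the Fubini--Study diameter $\pi/2$ you propose to track plays no role --- the inputs are $\vol(\mathbb{C}P^{n})=\pi^{n}/n!$ and sectional curvature $\geq 1$. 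Your heuristic for the bases is also misplaced: in the paper the $2^{n+1}$ comes from the Bishop upper bound at radius $2c$ (i.e., $a+c$ with $a=c$ in Lemma \ref{ballnomeasure}), and $\pi^{m-1}$ from the concavity estimate for $\sin\theta$ in the Bishop--Gromov lower bound, not from median concentration or the diameter. In short, the outer asymptotic argument is right, but the step you defer is the substance of the result, and the intermediate inequality you assume is stronger than what the method delivers.
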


	 We estimate $\sikaku \big(
	 SO(n), SO(m)\big)$ from below by the differnce of
	 their diameters (see Lemma \ref{peach}). Consequently, we
	 obtain the following proposition:
 \begin{prop}\label{oosawaakane}Assume that two sequences $\{
	    n_k \}_{k=1}^{\infty}$, $\{ m_k   \}_{k=1}^{\infty}$ of
	   natural numbers satisfy 
	   $n_k \leq C_1 k$, $m_k \leq C_2 k$ and $|n_k -m_k|\geq C_3
	   \sqrt{k}$, $k=1,2, \cdots$, for some positive constants $C_1, C_2 , C_3$. Then, we have
	   \begin{align*}
	    \liminf_{k \to \infty} \sikakuu \big( SO(n_k) , SO(m_k)
	    \big)\geq
	    \min \Big\{  \frac{1}{2}, \frac{C_3}{\sqrt{C_1}     +
	    \sqrt{C_2} }    \Big\}.
	    \end{align*}In paticular, if in addition $|n_k - m_k|\geq C_4
	   k^{\alpha}$, $k=1,2,\cdots$, holds for some constant $C_4 >0$ and a number
	   $\alpha > 1/2$, then we have 
	   \begin{align*}
	    \liminf_{k \to \infty}  \sikakuu \big( SO(n_k), SO(m_k)
	    \big)\geq \frac{1}{2}.
	    \end{align*}
\end{prop}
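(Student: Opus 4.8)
The plan is to feed the diameter asymptotics of $SO(n)$ into Lemma \ref{peach}. Recall that Lemma \ref{peach} bounds $\sikakuu\big(SO(n),SO(m)\big)$ from below by (a quantity governed by) $|\diam SO(n)-\diam SO(m)|$, with the lower bound capped by $1/2$. Granting this, the whole proposition reduces to estimating this difference of diameters from below under the stated growth hypotheses, after which only elementary manipulations of $\min$ and $\liminf$ remain.

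First I would record the growth of the diameter. With the bi-invariant metric in force, a skew-symmetric generator decomposes into planar rotation blocks, and the element farthest from the identity is the one whose rotation angles are all maximal; this yields $\diam SO(n)\sim \sqrt{n}$ as $n\to\infty$, which is precisely where the exponent $1/2$ in the hypothesis $|n_k-m_k|\ge C_3\sqrt{k}$ comes from. The crucial elementary step is then the conjugate-factor identity
\[
\diam SO(n)-\diam SO(m)\ \sim\ \sqrt{n}-\sqrt{m}\ =\ \frac{n-m}{\sqrt{n}+\sqrt{m}},
\]
which converts a difference of square roots into $n-m$ divided by $\sqrt{n}+\sqrt{m}$.

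Next, using $n_k\le C_1 k$ and $m_k\le C_2 k$ to bound the denominator by $\sqrt{n_k}+\sqrt{m_k}\le(\sqrt{C_1}+\sqrt{C_2})\sqrt{k}$, together with $|n_k-m_k|\ge C_3\sqrt{k}$ in the numerator, I obtain
\[
\big|\diam SO(n_k)-\diam SO(m_k)\big|\ \ge\ \frac{C_3\sqrt{k}}{(\sqrt{C_1}+\sqrt{C_2})\sqrt{k}}\ =\ \frac{C_3}{\sqrt{C_1}+\sqrt{C_2}}
\]
for all sufficiently large $k$. Inserting this into Lemma \ref{peach} and passing to the $\liminf$ gives $\liminf_{k\to\infty}\sikakuu\big(SO(n_k),SO(m_k)\big)\ge\min\{1/2,\,C_3/(\sqrt{C_1}+\sqrt{C_2})\}$, as claimed. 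For the sharpened assertion, when $|n_k-m_k|\ge C_4 k^{\alpha}$ with $\alpha>1/2$ the same computation produces the lower bound $\tfrac{C_4}{\sqrt{C_1}+\sqrt{C_2}}\,k^{\alpha-1/2}\to\infty$; hence the diameter difference eventually exceeds $1/2$, the minimum in Lemma \ref{peach} locks onto $1/2$, and $\liminf_{k\to\infty}\sikakuu\big(SO(n_k),SO(m_k)\big)\ge 1/2$ follows.

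Since the substantive estimate is supplied by Lemma \ref{peach}, which I am assuming, the only genuine obstacle is bookkeeping: making the diameter asymptotics precise enough that the conjugate-factor bound reproduces the stated constant $C_3/(\sqrt{C_1}+\sqrt{C_2})$ without a stray numerical factor from the chosen normalization, and then handling the interplay of $\min$, $\liminf$, and "for all large $k$" cleanly. Everything past the diameter computation is routine.
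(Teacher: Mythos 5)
Your route coincides with the paper's proof step for step: both reduce the proposition to the diameter-difference lemma (Lemma \ref{asobisugi}, the one the introduction cites as Lemma \ref{peach}), both use the conjugate-factor identity to convert $\sqrt{n_k}-\sqrt{m_k}$ into $(n_k-m_k)/(\sqrt{n_k}+\sqrt{m_k})$, both feed in the hypotheses $n_k\le C_1k$, $m_k\le C_2k$, $|n_k-m_k|\ge C_3\sqrt{k}$, and both handle the sharpened assertion by noting that for $\alpha>1/2$ the diameter difference tends to infinity, so the minimum in the lemma locks at $1/2$.

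However, the step you defer to ``bookkeeping'' is a genuine gap, and it is exactly where the proof lives. You invoke only the asymptotic $\diam SO(n)\sim\sqrt{n}$, and asymptotic equivalences cannot be subtracted: writing $\diam SO(n)=\sqrt{n}+E(n)$ with merely $E(n)=o(\sqrt{n})$, the error term $E(n_k)-E(m_k)$ in the difference is uncontrolled --- it can be of either sign and of size up to $o(\sqrt{k})$, which swamps the constant-size quantity $C_3/(\sqrt{C_1}+\sqrt{C_2})$ you want to extract. Since the target lower bound is a constant, you need the diameter pinned to additive accuracy $o(1)$, i.e.\ a two-sided estimate with shrinking gap. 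The paper supplies precisely this: the elementary exact bounds $2\sqrt{n-1}\le\diam SO(n)\le 2\sqrt{n}$ (in the normalization used there), whose gap is $O(n^{-1/2})$, yield for $n_k\ge m_k$ that $\diam SO(n_k)-\diam SO(m_k)\ge 2(n_k-m_k-1)/(\sqrt{n_k-1}+\sqrt{m_k})\ge 2(C_3-1/\sqrt{k})/(\sqrt{C_1-1/k}+\sqrt{C_2})$. Note also that your constant is wrong: the diameter grows like $2\sqrt{n}$, not $\sqrt{n}$, and this is lucky rather than cosmetic --- the factor $2$ makes the computed liminf $2C_3/(\sqrt{C_1}+\sqrt{C_2})$, of which the proposition's stated bound is a comfortable weakening absorbing the $-1$ corrections, whereas if the constant really were $1$ as you wrote, your inequality ``for all sufficiently large $k$'' would be exactly borderline and would genuinely require the $o(1)$ error control you never established. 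Replace the asymptotics by the paper's explicit two-sided bound and everything else you wrote goes through unchanged.
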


As is related to the above Gromov's exercise, we also proves the
following proposition. This proposition is also mentioned by Gromov in
\cite[Section $3\frac{1}{2}.3$ Exercise $(e)$]{gromov}.
\begin{prop}\label{azusa}
We have
	    \begin{align*}
	     \sikakuu (\mathbb{S}^n, \mathbb{S}^{n-1}), \sikakuu
	     (\mathbb{C}P^n, \mathbb{C}P^{n-1})\to 0
	     \end{align*}as $n \to \infty$.
\end{prop}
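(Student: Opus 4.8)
The plan is to deduce the convergence directly from the definition of $\sikakuu$ by exhibiting, for each $n$, an explicit measure-preserving coupling of $\mathbb{S}^n$ with $\mathbb{S}^{n-1}$ (and of $\mathbb{C}P^n$ with $\mathbb{C}P^{n-1}$) whose distance distortion is uniformly small outside a set of small measure. For the spheres I write $\mathbb{S}^n=\{x\in\mathbb{R}^{n+1}:|x|=1\}$ and use the radial projection onto the equatorial subsphere,
\[
\pi(x_0,\dots,x_n)=\frac{(x_0,\dots,x_{n-1})}{|(x_0,\dots,x_{n-1})|},
\]
which is defined off the two poles (a null set). Since the normalized volume measure on $\mathbb{S}^n$ is invariant under the subgroup $O(n)\subset O(n+1)$ fixing the last coordinate, the push-forward $\pi_\ast(\mu_{\mathbb{S}^n})$ is $O(n)$-invariant, hence equals $\mu_{\mathbb{S}^{n-1}}$; thus $\pi$ is measure-preserving. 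Fixing any parametrization $\phi\colon[0,1]\to\mathbb{S}^n$ and setting $\psi:=\pi\circ\phi$ produces parametrizations of the two spaces on a common probability space, and the Lebesgue measure of a set of bad pairs $(s,t)$ equals the $\mu_{\mathbb{S}^n}\otimes\mu_{\mathbb{S}^n}$-measure of the corresponding subset of $\mathbb{S}^n\times\mathbb{S}^n$.

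The next step is the pointwise distortion estimate. Writing $x'=(x_0,\dots,x_{n-1})$, so that $\langle x,y\rangle=\langle x',y'\rangle+x_ny_n$ and $\langle\pi x,\pi y\rangle=\langle x',y'\rangle/(|x'|\,|y'|)$, a direct computation gives, whenever $|x_n|,|y_n|\le\delta$,
\[
\bigl|\langle x,y\rangle-\langle\pi x,\pi y\rangle\bigr|\le 4\delta^2 .
\]
Since $\dist_{\mathbb{S}^n}(x,y)=\arccos\langle x,y\rangle$ and $\dist_{\mathbb{S}^{n-1}}(\pi x,\pi y)=\arccos\langle\pi x,\pi y\rangle$, and since $\arccos$ is uniformly $\tfrac12$-H\"older continuous on $[-1,1]$ (its only non-Lipschitz points are $\pm1$, where it behaves like a square root), I obtain
\[
\bigl|\dist_{\mathbb{S}^n}(x,y)-\dist_{\mathbb{S}^{n-1}}(\pi x,\pi y)\bigr|\le K\sqrt{4\delta^2}=2K\delta
\]
for a universal constant $K$. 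This H\"older bound is exactly what lets me avoid treating near-coincident and near-antipodal pairs separately, since the inner-product distortion is $O(\delta^2)$ even where $\arccos$ is singular.

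It then remains to bound the measure of the exceptional set $\{x:|x_n|>\delta\}$. By the standard concentration of a single coordinate of a uniform point on $\mathbb{S}^n$ (whose law has density proportional to $(1-t^2)^{(n-2)/2}$) one has $\mu_{\mathbb{S}^n}(\{|x_n|>\delta\})\le Ce^{-cn\delta^2}$, so the set of pairs with $|x_n|>\delta$ or $|y_n|>\delta$ has measure at most $2Ce^{-cn\delta^2}$. Choosing for instance $\delta=\delta_n=n^{-1/3}$ makes both the distortion $2K\delta_n$ and the exceptional measure $2Ce^{-cn^{1/3}}$ tend to $0$; inserting this coupling into the definition of $\sikakuu$ yields $\sikakuu(\mathbb{S}^n,\mathbb{S}^{n-1})\to0$.

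Finally, the complex projective case runs in exact parallel. Realizing $\mathbb{C}P^n$ through unit vectors $z\in\mathbb{C}^{n+1}$ with Fubini--Study distance $\dist([z],[w])=\arccos|\langle z,w\rangle|$ (up to the chosen normalization), I take the homogeneous-coordinate projection $[z_0:\dots:z_n]\mapsto[z_0:\dots:z_{n-1}]$, which is measure-preserving by $U(n)$-invariance; the identity $\langle z,w\rangle=\langle z',w'\rangle+z_n\overline{w_n}$ gives the same $O(\delta^2)$ bound on $\bigl||\langle z,w\rangle|-|\langle\pi z,\pi w\rangle|\bigr|$ when $|z_n|,|w_n|\le\delta$, and $|z_n|^2$ has a $\mathrm{Beta}(1,n)$ law, so that $\mu(\{|z_n|>\delta\})=(1-\delta^2)^n\le e^{-n\delta^2}$. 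The same choice $\delta_n\to0$ then gives $\sikakuu(\mathbb{C}P^n,\mathbb{C}P^{n-1})\to0$. I expect the only genuinely delicate points to be the verification that the radial and homogeneous projections are \emph{exactly} measure-preserving, and the bookkeeping that forces the distortion threshold and the exceptional measure to vanish simultaneously; the uniform $\tfrac12$-H\"older continuity of $\arccos$ removes what would otherwise be the main analytic obstacle near the singular inner-product values $\pm1$.
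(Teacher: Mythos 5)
Your proposal is correct, and its skeleton is the same as the paper's: identify $\mathbb{S}^{n-1}$ with the equatorial subsphere, push the measure of $\mathbb{S}^n$ down by the nearest-point (radial) projection, identify the pushforward with the normalized volume via rotation-invariance and uniqueness of the invariant measure (the paper does this through Christensen's lemma, Lemma \ref{christ}), and absorb the region far from the equator into the exceptional set of $\square_1$. But your execution differs at three substantive points, two of which are improvements and one a detour. First, the paper projects only the neighborhood $(\mathbb{S}^{n-1})_\varepsilon$, so its projected map carries mass $r_n<1$ and must be patched on $(r_n,1]$ with a rescaled parameter of $\mathbb{S}^{n-1}$; you project everything, so $\pi\circ\phi$ is outright a parameter of $\mathbb{S}^{n-1}$ and the patching step disappears --- a genuine simplification. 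Second, where the paper invokes the L\'{e}vy-family property (ultimately L\'{e}vy--Gromov isoperimetry) to get $\mu_n\big((\mathbb{S}^{n-1})_\varepsilon\big)\to 1$ qualitatively, you use the explicit coordinate density $(1-t^2)^{(n-2)/2}$, resp.\ the $\mathrm{Beta}(1,n)$ law of $|z_n|^2$, which yields quantitative rates; both are valid. Third, your distortion estimate via inner products and the $\frac{1}{2}$-H\"{o}lder continuity of $\arccos$ is correct but heavier than necessary: since the equator is totally geodesic in $\mathbb{S}^n$ (and $\mathbb{C}P^{n-1}$ in $\mathbb{C}P^n$), one has $\dist_{n-1}(\pi x,\pi y)=\dist_n(\pi x,\pi y)$ and $\dist_n(x,\pi x)=\arcsin|x_n|\leq\arcsin\delta$, so the triangle inequality alone gives a linear bound $2\arcsin\delta$ with no case analysis; this is exactly the paper's two-line estimate, and the ``main analytic obstacle'' you anticipate near inner product $\pm 1$ is in fact absent. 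One point you should state explicitly: the definition of $\square_1$ demands a single measurable set $T\subseteq[0,1]$ with $|\Phi^{\ast}\dist_n(s,t)-\psi^{\ast}\dist_{n-1}(s,t)|\leq\varepsilon$ for \emph{all} $s,t\in T$, not merely a small measure of bad pairs in $[0,1]^2$; your argument does comply, because your bad event factorizes and you may take $T:=\{s\in[0,1] : |\phi(s)_n|\leq\delta_n\}$, but the sentence equating things with a ``measure of bad pairs'' should be replaced by this observation.
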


\section{Preliminaries}

  \subsection{Definition of Gromov's box distance function $\sikaku$}
	\begin{dfn}\upshape
	 Let $\lambda \geq 0$ and $(X,\mu)$ be a measure
	   space with $\mu(X)< +\infty$. For two maps $\dist_1,\dist_2:X\times X \to
	   \mathbb{R}$, we define a number $\bounasi (\dist_1,\dist_2)$
	   as the infimum of $\varepsilon >0$ such that there exists a
	   measurable subset $T_{\varepsilon}\subseteq X$ of measure
	   at least $\mu(X)-\lambda \varepsilon$ satisfying
	   $|\dist_1(x,y)-\dist_2(x,y)|\leq \varepsilon$ for any $x,y
	   \in T_{\varepsilon}$.
	 \end{dfn}
	   It is easy to see that this is a distance function on the set of all
	   functions on $X\times X$, and the two distance functions $\square_{\lambda}$ and
	   $\square_{\lambda'}$ are equivalent to each other for any
	   $\lambda,\lambda'>0$. 
	   \begin{dfn}[parameter]\upshape Let $X$ be an mm-space and
	    $\mu(X)=m$. Then, there exists a Borel measurable map
	    $\varphi :[0,m]\to X$ with $\varphi
	    _{\ast}(\mathcal{L})=\mu$, where $\mathcal{L}$ stands for
	    the Lebesgue measure on $[0,m]$. We call $\varphi$ a
	    \emph{parameter} of $X$. 
	    \end{dfn}
	    Note that if the support of $X$ is not a one-point, then its
	    parameter is not unique.
	   \begin{dfn}[Gromov's box distance function]\upshape If two mm-spaces $X,Y$ satisfy $\mu_X(X)=\mu_Y(Y)=m$, we define
		\begin{align*}
		 \sikaku (X,Y):= \inf \square_{\lambda}(\varphi_X^{\ast}\dist_X,
\varphi_Y^{\ast}\dist_Y),
\end{align*}where the infimum is taken over all parameters 
$\varphi_X:[0,m] \to X, \ \varphi_Y :[0,m] \to Y$, and
$\varphi_X^{\ast}\dist_X$ is defined by $\varphi_X^{\ast}\dist_X(s,t):=\dist_X(\varphi_X(s),\varphi_X(t))$ for $s,t \in [0,m]$.
If $\mu_X(X)<\mu_Y(Y)$, putting $m:=\mu_X(X), m':=\mu_Y(Y)$
, we define
\begin{align*}
\sikaku (X,Y):= \sikaku \Big(  X, \frac{m}{m'}Y \Big)+m'-m, 
\end{align*}where $(m/m')Y:=(Y,\dist_Y,(m/m')\mu_Y)$. 
\end{dfn}

We recall that two mm-spaces are \emph{isomorphic} to each other if
there is a measure preserving isometry between the supports of their
measures. $\sikaku$ is a distance function on $\mathcal{X}$ for any
$\lambda \geq 0$. See \cite[Section 1, 3]{funafuna} for a complete proof
of that. Note that the distance functions $\sikaku$ and $\underline{\square}_{\lambda'}$ are
equivalent to each other for distinct $\lambda , \lambda'>0$. 

\subsection{Definition of observable distance functions $\obd$}
For a measure space $(X,\mu)$ with $\mu(X)< +\infty$, we denote by
$\mathcal{F}(X, \mathbb{R})$ the space of all functions on $X$. Given 
$\lambda \geq 0$ and $f, g \in \mathcal{F}(X, \mathbb{R})$, we put 
\begin{align*}
 \me_{\lambda}(f,g):= \inf \{  \varepsilon >0 \mid \mu 
 \big(\{ x\in X \mid |f(x)-g(x)|\geq \varepsilon \} \big)\leq \lambda \varepsilon             \}.
\end{align*}Note that this $\me_{\lambda}$ is a distance function on $\mathcal{F}(X,
 \mathbb{R})$ for any $\lambda \geq 0$ and its topology on
 $\mathcal{F}(X, \mathbb{R})$ coincides with the topology of the 
convergence in measure for any $\lambda >0$. Also, the distance functions
 $\me_{\lambda}$ for all $\lambda >0$ are mutually equivalent. 

 We recall that the \emph{Hausdorff distance} between two closed
subsets $A$ and $B$ in a metric space $X$ is defined by
 \begin{align*}
\dist_H (A,B):= \inf \{  \varepsilon >0 \mid A \subseteq B_{\varepsilon} ,
  B \subseteq A_{\varepsilon}       \},
 \end{align*}where $A_{\varepsilon}$ is a closed $\varepsilon$-neighborhood of $A$.

Let $(X,\mu)$ be a measure space with $\mu(X)< +\infty$. For a
semi-distance $\dist$ on $X$, we indicate by $\lip_1 (\dist)$ the space of
all $1$-Lipschitz functions on $X$ with respect to $\dist$. Note that
$\lip_1 (\dist)$ is a closed subset in
$(\mathcal{F}(X,\mathbb{R}),\me_{\lambda})$ for any $\lambda\geq 0$.

 \begin{dfn}\upshape For $\lambda \geq 0$ and two semi-distance functions $\dist , \dist'$ on $X$, we define
  \begin{align*}
   \bobd (\dist ,\dist'):= \dist_{H} \big(  \lip_1(\dist ), \lip_1 (\dist')      \big),
   \end{align*}where $\dist_H$ stands for the Hausdorff distance
  function in $(\mathcal{F}(X,\mathbb{R}),\me_{\lambda}).$
  \end{dfn}
This $\bobd$ is actually a distance function on the space of all
semi-distance
functions on
$X$ for all $\lambda \geq 0$, and the two distance functions $\bobd$ and
$H_{\lambda'}\mathcal{L}\iota_{1}$ are equivalent to each other for any
$\lambda, \lambda' >0$.
  \begin{lem}\label{mikitii}For any two semi-distance functions $\dist , \dist'$ on $X$, we have
   \begin{align*}
    \bobd (\dist , \dist')\leq \bounasi (\dist, \dist').
    \end{align*}
   \begin{proof}For any $\varepsilon>0$ with $\bounasi (X,Y)<
    \varepsilon$, there exists a measurable subset $T_{\varepsilon} \subseteq
    X$ such that $\mu (X\setminus T_{\varepsilon})\leq \lambda
    \varepsilon$ and $|\dist(x,y)-\dist'(x,y)|\leq \varepsilon$ for any
    $x,y \in T_{\varepsilon}$. Given arbitrary $f\in \lip_1(\dist)$, we
    define $\widetilde{f}\in \mathcal{F}(X, \mathbb{R})$ by $\widetilde{f}(x):= \inf
    \{ f(y)+\dist' (x,y) \mid y\in T_{\varepsilon}  \}$. We see
    easily that $\widetilde{f} \in \lip_1 (\dist')$ and $\widetilde{f}(x)
    \leq f(x)$ for any $x\in T_{\varepsilon}$. Taking any $x\in
    T_{\varepsilon}$, we have
    \begin{align*}
     |f(x)- \widetilde{f}(x)| = \ &f(x)-\widetilde{f}(x)\\ = \ &  \sup
     \{ f(x)-f(y)-d'(x,y) \mid 
     y\in T_{\varepsilon} \}\\  \leq \ &\sup \{
     d(x,y)-d'(x,y) \mid y\in T_{\varepsilon}  \} \\ \leq  \
     &\varepsilon.  
     \end{align*}Therefore, we get $\me_{\lambda}(f, \widetilde{f})\leq
    \varepsilon $, which implies $\lip_1 (\dist) \subseteq \big( \lip_1
    (\dist')      \big)_{\varepsilon}$. Similary, we also have
    $\lip_1(\dist') \subseteq \big( \lip_1 (\dist)
    \big)_{\varepsilon}$, which yields $\bobd (\dist , \dist')\leq
    \varepsilon$. This completes the proof.
    \end{proof}
   \end{lem}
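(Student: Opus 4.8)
The plan is to establish the inequality by showing that whenever $\varepsilon > 0$ satisfies $\bounasi(\dist, \dist') < \varepsilon$, one also has $\bobd(\dist, \dist') \leq \varepsilon$; taking the infimum over all such $\varepsilon$ then gives the assertion. So I would fix such an $\varepsilon$ and first extract, from the definition of $\bounasi$, a measurable set $T_\varepsilon \subseteq X$ with $\mu(X \setminus T_\varepsilon) \leq \lambda \varepsilon$ on which $|\dist(x,y) - \dist'(x,y)| \leq \varepsilon$ holds for all $x, y$. Since $\bobd$ is by definition the Hausdorff distance between $\lip_1(\dist)$ and $\lip_1(\dist')$ in $(\mathcal{F}(X, \mathbb{R}), \me_\lambda)$, it suffices to prove the two neighborhood inclusions $\lip_1(\dist) \subseteq (\lip_1(\dist'))_\varepsilon$ and $\lip_1(\dist') \subseteq (\lip_1(\dist))_\varepsilon$.

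For the first inclusion, given $f \in \lip_1(\dist)$, I would produce an approximant in $\lip_1(\dist')$ by the McShane--Whitney inf-convolution restricted to the good set, namely $\widetilde{f}(x) := \inf\{f(y) + \dist'(x,y) \mid y \in T_\varepsilon\}$. The familiar properties of this construction immediately give that $\widetilde{f}$ is $1$-Lipschitz for $\dist'$, hence $\widetilde{f} \in \lip_1(\dist')$, and that $\widetilde{f}(x) \leq f(x)$ for every $x \in T_\varepsilon$ (choose $y = x$ and use $\dist'(x,x) = 0$). The decisive estimate is the matching lower bound on $T_\varepsilon$: for $x \in T_\varepsilon$ we have $f(x) - \widetilde{f}(x) = \sup\{f(x) - f(y) - \dist'(x,y) \mid y \in T_\varepsilon\}$, and because $f$ is $1$-Lipschitz for $\dist$ we may replace $f(x) - f(y)$ by $\dist(x,y)$, so the supremum is bounded by $\sup\{\dist(x,y) - \dist'(x,y) \mid y \in T_\varepsilon\} \leq \varepsilon$.

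Combining the two bounds yields $|f(x) - \widetilde{f}(x)| \leq \varepsilon$ for all $x \in T_\varepsilon$, whence $\{x \in X \mid |f(x) - \widetilde{f}(x)| \geq \varepsilon\} \subseteq X \setminus T_\varepsilon$ has measure at most $\lambda \varepsilon$. By the definition of $\me_\lambda$ this gives $\me_\lambda(f, \widetilde{f}) \leq \varepsilon$, which is exactly the inclusion $\lip_1(\dist) \subseteq (\lip_1(\dist'))_\varepsilon$; the reverse inclusion follows by the same argument with the roles of $\dist$ and $\dist'$ interchanged.

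I expect the crux to be the estimate $f(x) - \widetilde{f}(x) \leq \varepsilon$ on $T_\varepsilon$. Two points need care there: the infimum defining $\widetilde{f}$ must range only over $T_\varepsilon$, so that the comparison $\dist(x,y) - \dist'(x,y) \leq \varepsilon$ is actually available for each competitor $y$; and the $1$-Lipschitz hypothesis on $f$ must be used for the semi-distance $\dist$ and not $\dist'$. The remaining ingredients---the Lipschitz continuity of the inf-convolution and the passage from a pointwise $\varepsilon$-bound to the $\me_\lambda$-estimate via the measure of the exceptional set $X \setminus T_\varepsilon$---are routine.
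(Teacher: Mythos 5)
Your proposal is correct and follows essentially the same route as the paper's own proof: the same inf-convolution $\widetilde{f}(x)=\inf\{f(y)+\dist'(x,y)\mid y\in T_{\varepsilon}\}$ over the good set, the same pointwise bound $|f-\widetilde{f}|\leq\varepsilon$ on $T_{\varepsilon}$ yielding $\me_{\lambda}(f,\widetilde{f})\leq\varepsilon$, and the symmetric inclusion for the Hausdorff distance. Your explicit remark that the exceptional set $\{|f-\widetilde{f}|\geq\varepsilon\}$ lies in $X\setminus T_{\varepsilon}$ of measure at most $\lambda\varepsilon$ merely spells out a step the paper leaves implicit.
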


  \begin{dfn}[Observable distance function]If two mm-spaces $X,Y$ satisfy $\mu_X
   (X) =\mu_Y(Y)=m$, we define
   \begin{align*}
    \obd (X,Y):= \inf \bobd (\varphi_{X}^{\ast}\dist_X, \varphi_{Y}^{\ast}\dist_Y),
   \end{align*}where the infimum is taken over all parameters
   $\varphi_X :[0,m]\to X, \ \varphi_{Y}:[0.m]\to Y$. If $\mu_X (X) <
   \mu_Y (Y)$, putting $m:= \mu_X (X), m':= \mu_Y (Y)$, we define 
   \begin{align*}
    \obd (X,Y):= \obd \Big( X, \frac{m}{m'}Y \Big) + m'-m.
    \end{align*}
  \end{dfn}
$\obd$ is a distance function on $\mathcal{X}$ for any $\lambda\geq 0$.
See \cite[Section $3$]{funafuna} for a complete proof of that. Note that the distance functions $\obd$ and
$\underline{H}_{\lambda'}\mathcal{L}\iota_1$ are equivalent to each
other for any $\lambda, \lambda' >0$.

For a Borel measure $\nu$ on $\mathbb{R}$ with $m:=\nu
(\mathbb{R})<+\infty$ and $\kappa >0$, we put
\begin{align*}
\diam (\nu,m-\kappa):=  \inf \{ \diam Y \mid Y \subseteq \mathbb{R} \text{ is a Borel subset such that }\nu_Y(Y)\geq m-\kappa\},
\end{align*}and call it the \emph{partial diameter} on $\nu$.

\begin{dfn}[Observable diameter]\upshape Let $(X,\dist,\mu)$ be an
 mm-space and let $m:=\mu(X)$. For any $\kappa >0$ we
	 define the \emph{observable diameter} of $X$ by 
	 \begin{align*}
	  \diam (X\oblip \mathbb{R}, m-\kappa):=
	   \sup \{ \diam (f_{\ast}(\mu),m-\kappa) \mid f:X\to \mathbb{R} \text{ is an }1 \text{{\rm -Lipschitz function}}  \}.
	  \end{align*}
\end{dfn}

The idea of the observable diameter came from the quantum and
statistical mechanics, that is, we think of $\mu$ as a state on a
configuration space $X$ and $f$ is interpreted as an observable. We define a sequence $\{  X_n   \}_{n=1}^{\infty}$ of
	mm-spaces is a \emph{L\'{e}vy family} if $\diam (X_n\oblip \mathbb{R},m_n-\kappa)\to 0$ as $n\to \infty$ for any $\kappa >0$, where
	$m_n$ is the total measure of the mm-space $X_n$. This is equivalent to that for any
	$\varepsilon >0$ and any sequence $\{ f_n :X_n \to \mathbb{R}\}_{n=1}^{\infty}$ of $1$-Lipschitz functions, we have 
	\begin{align*}
	 \mu_n(\{ x\in X_n \mid |f_n(x) - m_{f_n}|\geq \varepsilon
	 \})\to 0 \tag{$\diamondsuit$}  \text{ as } n\to \infty,
	\end{align*}where $m_{f_n}$ is a some constant
	determined by $f_n$. 

	\begin{ex}\upshape Let $\{ M_n   \}_{n=1}^{\infty}$ be a sequence of
	 compact connected Riemannian manifolds. Let $\dist_n$ be the
	 Riemannian distance on $M_n$ and $\mu_n$ be its Riemannian
	 volume measure normalized as $\mu_n(M_n)=1$. Assume that $\ric_{M_n}\geq \kappa_n \to
	 +\infty$ as $n\to \infty$. Then, by virtue of L\'{e}vy-Gromov's isoperimetric
	 inequality, the sequence $\{ M_n   \}_{n=1}^{\infty}$ is a
	 L\'{e}vy family ({cf.~\cite[Section $1$, Remark
	 $2$]{milgro}}). For example, $ \{ \mathbb{S}^n \}_{n=1}^{\infty}
	 $ and $ \{ \mathbb{C}P^n\}_{n=1}^{\infty}$
	 are L\'{e}vy
	 families. Recall that the Fubini-Study metric on
	 $\mathbb{C}P^n$ is the unique
	 Riemannian metric on $\mathbb{C}P^n$ such that the canonical
	 projection $\mathbb{S}^{2n+1} \to \mathbb{C}P^n$ is a Riemannian
	 submersion. Since $\ric_{SO(n)}\geq (n-1)/4$, the sequence $\{
	 SO(n)\}_{n=1}^{\infty}$ is L\'{e}vy family. Since the distance
	 function 
	 induced from the Hilbert-Schmidt norm on $SO(n)$ is not greater
	 than that of the Riemannian distance function, $\{  SO(n)
	 \}_{n=1}^{\infty}$ is L\'{e}vy family with respect to also the
	 Hilbert-Schmidt norms. 
	 \end{ex}

	\begin{ex}[Hamming cube]\upshape Let $\mu_n$ be the normalized counting
	 measure on $\{   0,1  \}^n$ and $\dist_n$ be the \emph{Hamming
	 distance function} on $\{  0,1    \}^n$, that is,  
	 \begin{align*}
	  \dist_n \big( (x_i)_{i=1}^n,   (y_i)_{i=1}^n\big):= \frac{1}{n}\card \big( \{  i\in \{1, \cdots
	  ,n\} \mid x_i \neq y_i       \} \big).
	  \end{align*}The mm-space $\{ 0,1 \}^n$ is called the
	 \emph{Hamming cube}. The sequence $\big\{  \{ 0,1   \}^n
	 \big\}_{n=1}^{\infty}$ is a L\'{e}vy family (cf.~\cite[Section $3\frac{1}{2}.42$]{gromov}).
	 \end{ex}

Gromov showed the following proposition by considering a constant $m_{f_n} $ in
$(\diamondsuit)$ as a Lipschitz funtion from a one-point space $\{ \ast_n
\}$ with total measure $\mu_n(X_n)$. 
\begin{prop}\cite[Section $3\frac{1}{2}.45$]{gromov}\label{onepoint} A sequence $\{  X_n   \}_{n=1}^{\infty}$ of mm-spaces
 is a L\'{e}vy family if and only if $\obd (X_n , \{  \ast_n  \})\to 0$
 as $n\to \infty$ for any $\lambda >0$.
\end{prop}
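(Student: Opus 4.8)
The plan is to first make the quantity $\obd(X_n,\{\ast_n\})$ completely explicit, and then read off both implications from the reformulation $(\diamondsuit)$ of the L\'evy condition. Any parameter $\varphi_{\ast}\colon[0,m_n]\to\{\ast_n\}$ of the one-point space pulls the distance back to the identically-zero semi-distance on $[0,m_n]$, so $\lip_1(\varphi_{\ast}^{\ast}\dist)$ consists exactly of the constant functions. For a parameter $\varphi\colon[0,m_n]\to X_n$ one checks that $\lip_1(\varphi^{\ast}\dist_{X_n})=\{f\circ\varphi\mid f\in\lip_1(\dist_{X_n})\}$ (a function that is $1$-Lipschitz for $\varphi^{\ast}\dist_{X_n}$ is constant on the fibres of $\varphi$, hence descends to a $1$-Lipschitz function on the support of $\mu_n$). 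Since constants are $1$-Lipschitz for every semi-distance, the constants form a subset of $\lip_1(\varphi^{\ast}\dist_{X_n})$, so in the Hausdorff distance $\bobd(\varphi^{\ast}\dist_{X_n},\varphi_{\ast}^{\ast}\dist)$ one inclusion is automatic and only $\lip_1(\varphi^{\ast}\dist_{X_n})\subseteq(\text{constants})_{\varepsilon}$ survives. Finally, because $\varphi_{\ast}\mathcal{L}=\mu_n$, the number $\me_{\lambda}(f\circ\varphi,c)$ depends only on the law of $f$ under $\mu_n$; this both shows that the infimum over parameters is redundant and yields $\me_{\lambda}(f\circ\varphi,c)=\inf\{\varepsilon>0\mid\mu_n(\{|f-c|\ge\varepsilon\})\le\lambda\varepsilon\}$.

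Putting these together, I would record that $\obd(X_n,\{\ast_n\})$ equals the infimum of those $\varepsilon>0$ for which every $f\in\lip_1(\dist_{X_n})$ admits a constant $c=c_f$ with $\mu_n(\{|f-c|\ge\varepsilon\})\le\lambda\varepsilon$; the defining predicate is monotone in $\varepsilon$, so this infimum behaves as a genuine threshold. Granting this, the equivalence is immediate. If $\obd(X_n,\{\ast_n\})=:\delta_n\to0$, then given any sequence of $1$-Lipschitz $f_n$ and any $\varepsilon>0$, choose $\varepsilon_n>\delta_n$ with $\varepsilon_n\to0$; the threshold property provides $m_{f_n}:=c_{f_n}$ (depending only on $f_n$) with $\mu_n(\{|f_n-m_{f_n}|\ge\varepsilon_n\})\le\lambda\varepsilon_n$, and for $n$ large enough that $\varepsilon_n<\varepsilon$ this forces $\mu_n(\{|f_n-m_{f_n}|\ge\varepsilon\})\le\lambda\varepsilon_n\to0$, which is exactly $(\diamondsuit)$. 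Conversely, assuming $\{X_n\}$ is L\'evy, suppose for contradiction that $\obd(X_n,\{\ast_n\})\not\to0$ for some $\lambda>0$; this gives $\varepsilon_0>0$ and a subsequence with $\obd(X_{n_k},\{\ast_{n_k}\})\ge\varepsilon_0$. Evaluating the failed predicate at $\varepsilon_0/2$ produces $1$-Lipschitz functions $f_{n_k}$ with $\mu_{n_k}(\{|f_{n_k}-c|\ge\varepsilon_0/2\})>\lambda\varepsilon_0/2$ for every constant $c$. Extending $\{f_{n_k}\}$ to a full sequence by constants off the subsequence and applying $(\diamondsuit)$ with the centering constants $m_{f_{n_k}}$ then contradicts the uniform lower bound $\lambda\varepsilon_0/2>0$.

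The genuine work is concentrated in the first step rather than in the two implications. The main obstacle is the careful bookkeeping of the two nested infima built into $\bobd$: the Hausdorff distance is itself an infimum over $\varepsilon$, and each $\me_{\lambda}$ is an infimum of thresholds, so one must take $\delta\downarrow\varepsilon$ limits to pass cleanly between ``$\me_{\lambda}(f\circ\varphi,c)\le\varepsilon$'' and the level-set bound $\mu_n(\{|f-c|\ge\varepsilon\})\le\lambda\varepsilon$ without losing or gaining the endpoint. A secondary point requiring care is the identification $\lip_1(\varphi^{\ast}\dist_{X_n})=\{f\circ\varphi\mid f\in\lip_1(\dist_{X_n})\}$ together with the accompanying claim that the value of $\bobd$ is independent of the chosen parameter, which is what removes the infimum over parameters from the definition of $\obd$ and turns the final characterization into a statement purely about $(X_n,\mu_n)$. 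Once these are settled, the argument is, as Gromov indicates, simply the observation that the centering constant $m_{f_n}$ in $(\diamondsuit)$ is nothing but a $1$-Lipschitz function on the one-point space.
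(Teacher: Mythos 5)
Your proposal is correct and takes essentially the same approach as the paper: for Proposition \ref{onepoint} the paper records only Gromov's one-line idea---that the constant $m_{f_n}$ in $(\diamondsuit)$ is precisely a $1$-Lipschitz function on the one-point space $\{\ast_n\}$ of total measure $\mu_n(X_n)$---and your argument is a faithful, correct expansion of exactly that idea, via the identification $\lip_1(\varphi^{\ast}\dist_{X_n})=\{f\circ\varphi \mid f\in\lip_1(\dist_{X_n})\}$, the parameter-independence coming from $\varphi_{\ast}\mathcal{L}=\mu_n$, and the threshold characterization of $\obd(X_n,\{\ast_n\})$. The endpoint bookkeeping you flag is handled correctly, so nothing further is needed.
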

	 \section{Estimates of Gromov's box distance function}

Let $X$ be a metric space. Denote by $B_X(x,r)$ the closed ball in $X$
centered at $x\in X$ with radius $r>0$. A Borel measure $\mu$ on $X$ is said to be
\emph{uniformly distributed} if  
\begin{align*}
0< \mu\big( B_X(x,r) \big)= \mu\big(B_X(y,r)\big)<+ \infty
\end{align*}for any $r>0$ and $x,y \in X$.

From Lemma \ref{mikitii}, we see that the topology on $\mathcal{X} $ determined by $\sikaku$
is not weaker than that of $\obd$ for any $\lambda \geq 0$. For a Borel
measure $\mu$ on a metric space, we denote by $\supp \mu$ its support.
	 \begin{prop}\label{kyou}Let $\{ (X_n ,\dist_n, \mu_n)   \}_{n=1}^{\infty}$
	  be a L\'{e}vy family such that $\mu_n $ is uniformly
	  distributed Borel probability measure satisfying $X_n = \supp
	  \mu_n$ and
	  $\inf\limits_{n\in \mathbb{N}}\diam X_n >0$. Then, the
	  sequence $\{ X_n
	  \}_{n=1}^{\infty}$ does not converge with respect to
	  the distance function $\sikaku$ for any $\lambda \geq 0$.
	  \begin{proof}
	   Suppose that $\{  X_n  \}_{n=1}^{\infty}$
	   convereges and let $X$ be its limit. Since $\{ X_n
	  \}_{n=1}^{\infty}$ is a L\'{e}vy family, by using
	   Proposition \ref{onepoint}, $X$ must be a one-point
	   space. Fix $\varepsilon >0$ with
	   $\varepsilon < \min \{  3, \inf\limits_{n\in \mathbb{N}} \diam
	   X_n \}/3$. For any suffieciently large $n\in
	   \mathbb{N}$, there exists a parameter $\varphi_n :[0,1]\to
	   X_n$ of $X_n$ and Borel subset $T_n \subseteq [0,1]$ such
	   that $\mathcal{L}(T_n)>1-\varepsilon /2$ and $\dist_n
	   \big(\varphi_n (s), \varphi_n (t)   \big)< \varepsilon /2$
	   for any $s,t \in T_n$. Fix a point $t_n \in T_n$.
	   There exists a point $x_n \in X_n$ such that $\dist_n \big(
	   \varphi_n(t_n), x_n    \big) \geq \diam X_n /3 > \varepsilon
	   $ and hence $B_{X_n} ( \varphi_n (t_n) ,\varepsilon
	   /2        ) \cap B_{X_n} ( x_n , \varepsilon
	   /2)=\emptyset$. Therefore, we get
	   \begin{align*}
	    1\geq \ &\mu_n \big( B_{X_n}(\varphi_n (t_n), \varepsilon /2)
	    \cup  B_{X_n} (x_n , \varepsilon /2)             \big)\\ 
	    = \ &    2\mu_n \big( B_{X_n} (\varphi_n (t_n), \varepsilon /2)
	    \big)\\ = \ &2\mathcal{L}\big( \varphi_n^{-1} \big(
	    B_{X_n}(\varphi_n(t_n), \varepsilon /2)         \big)
	    \big)\geq 2\mathcal{L}(T_n) \geq 2-\varepsilon  > 1,
	   \end{align*}which gives a contradiction. This completes the proof.
	  \end{proof}
	  \end{prop}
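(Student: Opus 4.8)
The plan is to argue by contradiction, first using the comparison $\obd \le \bounasi$ from Lemma \ref{mikitii} to pin down the shape of any putative limit, and then deriving a contradiction from the uniform distribution of the $\mu_n$ together with the positive lower bound on the diameters. So I would suppose that $\{ X_n \}_{n=1}^{\infty}$ converges to some mm-space $X$ with respect to $\sikaku$.

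Taking the infimum over parameters in Lemma \ref{mikitii} shows that $\obd(X,Y) \le \sikaku(X,Y)$ for all mm-spaces $X,Y$ of equal total measure, so $\sikaku$-convergence forces $\obd$-convergence to the same limit $X$. On the other hand, since $\{ X_n \}_{n=1}^{\infty}$ is a L\'evy family, Proposition \ref{onepoint} gives $\obd(X_n, \{ *_n \}) \to 0$, i.e.\ the sequence $\obd$-converges to a one-point space. Because $\obd$ is a genuine distance function, its limit is unique, so $X$ must itself be a one-point space; hence in fact $\sikaku(X_n, \{ * \}) \to 0$.

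Next I would unwind what smallness of $\sikaku(X_n, \{ * \})$ means. A parameter of a one-point space pulls the distance back to the zero function, so $\sikaku(X_n, \{ * \}) = \inf_{\varphi_n} \bounasi(\varphi_n^{\ast}\dist_n, 0)$. Fixing $\varepsilon$ with $0 < \varepsilon < \min\{ 3, \inf_n \diam X_n \}/3$, for all large $n$ there is therefore a parameter $\varphi_n : [0,1] \to X_n$ and a Borel set $T_n \subseteq [0,1]$ with $\mathcal{L}(T_n) > 1 - \varepsilon/2$ such that $\dist_n(\varphi_n(s), \varphi_n(t)) < \varepsilon/2$ for all $s,t \in T_n$; since the box value may be taken arbitrarily small, both the measure deficiency $\mathcal{L}([0,1] \setminus T_n)$ and the diameter of $\varphi_n(T_n)$ can be forced below $\varepsilon/2$. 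Fixing a base point $t_n \in T_n$, every point of $T_n$ then lies in $B_{X_n}(\varphi_n(t_n), \varepsilon/2)$, so $T_n \subseteq \varphi_n^{-1}(B_{X_n}(\varphi_n(t_n), \varepsilon/2))$, and since $\varphi_n$ pushes $\mathcal{L}$ forward to $\mu_n$ this gives $\mu_n(B_{X_n}(\varphi_n(t_n), \varepsilon/2)) \ge \mathcal{L}(T_n) > 1 - \varepsilon/2$.

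Finally I would play this large ball against a disjoint copy supplied by the diameter bound. Because $\diam X_n \ge \inf_n \diam X_n > 3\varepsilon$, there is a point $x_n \in X_n$ with $\dist_n(\varphi_n(t_n), x_n) \ge \diam X_n / 3 > \varepsilon$, so the triangle inequality makes $B_{X_n}(\varphi_n(t_n), \varepsilon/2)$ and $B_{X_n}(x_n, \varepsilon/2)$ disjoint. Uniform distribution of $\mu_n$ equates their measures, whence $1 \ge \mu_n(B_{X_n}(\varphi_n(t_n), \varepsilon/2) \cup B_{X_n}(x_n, \varepsilon/2)) = 2\,\mu_n(B_{X_n}(\varphi_n(t_n), \varepsilon/2)) > 2 - \varepsilon > 1$, the desired contradiction. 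The step that takes real care is the reduction to a one-point limit: it rests entirely on the comparison in Lemma \ref{mikitii} and on uniqueness of the $\obd$-limit, without which the uniform-distribution counting would have no fixed target to contradict. The remainder is an elementary volume-doubling estimate, valid for every $\lambda \ge 0$ (the case $\lambda = 0$ only enlarges $T_n$, and distinct positive $\lambda$ give equivalent $\sikaku$).
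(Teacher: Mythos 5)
Your proposal is correct and takes essentially the same route as the paper: reduce to a one-point limit via Lemma \ref{mikitii} and Proposition \ref{onepoint}, extract for large $n$ a parameter $\varphi_n$ and a Borel set $T_n$ with $\mathcal{L}(T_n)>1-\varepsilon/2$ mapped into a ball of radius $\varepsilon/2$, then contradict uniform distribution using a disjoint second ball supplied by a point $x_n$ with $\dist_n\big(\varphi_n(t_n),x_n\big)\geq \diam X_n/3>\varepsilon$. The only difference is expository: you make explicit the comparison $\obd \leq \sikaku$, the uniqueness of the $\obd$-limit, and the $\lambda$-dependence, all of which the paper leaves implicit.
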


	  From Proposition \ref{kyou}, we see that many L\'{e}vy
	  families such as $\{ \mathbb{S}^n\}_{n=1}^{\infty}$, $\{
	  \mathbb{C}P^n\}_{n=1}^{\infty} $, $\{ SO(n)
	  \}_{n=1}^{\infty}$, and $\big\{  \{ 0,1   \}^n
	  \big\}_{n=1}^{\infty} $ have no convergent subsequences with
	  respect to the distance function $\sikaku$. Therefore, the
	  distance function
	  $\sikaku$ determines the topology on $\mathcal{X}$ strictly
	  stronger than that of the distance function $\obd$ for any $\lambda >0$.
	  However, since the proof of Proposition \ref{kyou} is by
	  contradition, we do not estimate $\sikaku
	  (X_n,X_m)$ from below for $n,m\in \mathbb{N}$.

	  The proof of the following proposition is an analogue of
	  the proof of \cite[Lemma 5.10]{Colding}
	 \begin{lem}\label{ballnomeasure}Let $(X, \dist_X , \mu_X),
	  (Y,\dist_Y, \mu_Y)$ be mm-spaces and assume that $\mu_X ,
	  \mu_Y$ are uniformly distributed Borel probability
	  measures. Denote by $v_X (r)$ $($respectively, $v_Y (r)$$)$
	  the measure of a closed ball of $X$ $($respectively, $Y$$)$
	  with radius $r>0$ and assume that $v_X(a+c)\leq (1-c) v_Y
	  (a/2)$ for some $a,c>0$ with $c<1$. Then, we have $\sikakuu (X,Y)\geq
	  c$.
	  \begin{proof}Let us prove the lemma by
	   contradiction. Suppose that $\sikakuu (X,Y) < c$. Then,
	   there exist compact subset $T \subseteq [0,1]$ and two
	   parameters $\varphi_X :[0,1]\to X$, $\varphi_Y :[0,1]\to Y$
	   such that 
	   \begin{itemize}
	    \item[$(1)$]$\mathcal{L} (T)> 1-c$,
	    \item[$(2)$]$\varphi_X |_{T}:T \to X$, $\varphi_Y |_{T}:T
			 \to Y$ are continuous,
	    \item[$(3)$]$|\dist_X \big( \varphi_X (s), \varphi_X (t)
			\big)- \dist_Y     \big( \varphi_Y(s),
			\varphi_Y(t)        \big)|< c$ for any $s,t \in T$.
	    \end{itemize}By $(1)$ and $(2)$, $\varphi_Y(T)$ is compact .
	   Put 
	   \begin{align*}l := \ &\max \{ k\in \mathbb{N} \mid \text{there exist
	   points } p_i, \ i=1,\cdots ,k,\text{ such that}\\
	    &     \hspace{2.5cm} B_Y(p_i,a/2)\cap B_Y(p_j,a/2) =\emptyset
	    \text{ for any } i,j \text{ with } i\neq j              \}.
	    \end{align*}Then, there exist points $p_i$, $i=1, \cdots ,k,$
	   such that $B_Y (p_i,a/2) \cap B_Y (p_j,a/2) =\emptyset$ for
	   any $i,j$ with $i\neq j$. Hence, we get
	   \begin{align*}
	    1\geq \mu_Y(\bigcup_{i=1}^l
	    B_Y(p_i,a/2))=\sum_{i=1}^l\mu_Y(B_Y(p_i,a/2))=l\cdot v_Y(a/2).
	    \end{align*}It also follows from the definition of $l$ that
	   $\varphi_Y (T) \subseteq
	   \bigcup\limits_{i=1}^{l}B_Y(p_i,a)$. For any $i=1, \cdots
	   ,l$, we fix $t_i\in T$ with $p_i = \varphi_Y (t_i)$. 
	   \begin{claim}\label{haraitai}
	    \begin{align*}
	     \varphi_X(T)\subseteq
	     \bigcup\limits_{i=1}^lB_X \big(\varphi_X(t_i) ,a+c \big).
	     \end{align*}
	    \begin{proof}Take an arbitrary $q= \varphi_X(s) \in \varphi_X
	     (T)$, $s \in T$. Since $\varphi_Y(s)\in
	     \varphi_Y(T)\subseteq \bigcup\limits_{i=1}^l B_Y(p_i,a) $,
	     there exists $i$ with $1\leq i \leq l$ such that $\dist_Y
	     \big( \varphi_Y(s), p_i \big)\leq a$. Therefore, by using
	     $(2)$, we obtain 
	     \begin{align*}
	      \dist_X\big(  \varphi_X (s), \varphi_X(t_i)     \big)<
	      \dist_Y \big( \varphi_Y(s), p_i    \big) +c \leq a+c.
	      \end{align*}This completes the proof of the claim.
	     \end{proof}
	    \end{claim}
	   Applying Claim \ref{haraitai}, we get
	   \begin{align*}
	    1\leq \sum_{i=1}^{l}\frac{\mu_X
	    \big(B_X\big(\varphi_X(t_i),a+c\big)\big)}{\mu_X\big(\varphi_X(T)\big)}=l\cdot\frac{v_X(a+c)}{\mu_X\big(\varphi_X(T)\big)} 
\leq   \frac{v_X(a+c)}{v_Y(a/2)\cdot \mu_X\big(\varphi_X(T)\big)}.
	    \end{align*}
	   Since $ \mu_X \big( \varphi_X (T)      \big)\geq \mathcal{L}
	   \big( \varphi_X^{-1}\big( \varphi_X (T)\big)\big)\geq
	   \mathcal{L}(T)> 1-c$, we obtain
	   \begin{align*}
	    1\leq \frac{v_X(a+c)}{v_Y(a/2)\cdot
	    \mu_X\big(\varphi_X(T)\big)}<
	    \frac{v_X(a+c)}{v_Y(a/2)\cdot (1-c)}\leq 1,
	    \end{align*}which is a contradiction. This completes the
	   proof of Lemma \ref{ballnomeasure}.
	   \end{proof}
	  \end{lem}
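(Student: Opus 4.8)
The plan is to argue by contradiction, following the packing-and-transfer scheme behind Colding's Lemma $5.10$. Suppose $\sikakuu(X,Y) < c$. Unwinding the definition of the box distance (here $\lambda = 1$ and the total mass is $1$), I would extract parameters $\varphi_X \colon [0,1] \to X$ and $\varphi_Y \colon [0,1] \to Y$ together with a Borel set $T_0 \subseteq [0,1]$ of Lebesgue measure $\mathcal{L}(T_0) > 1 - c$ on which $|\dist_X(\varphi_X(s),\varphi_X(t)) - \dist_Y(\varphi_Y(s),\varphi_Y(t))| < c$ for all $s,t \in T_0$. Since the parameters are only Borel measurable, I would first apply Lusin's theorem to replace $T_0$ by a compact subset $T \subseteq T_0$, still of measure $> 1 - c$, on which both $\varphi_X|_T$ and $\varphi_Y|_T$ are continuous; this makes $\varphi_X(T)$ and $\varphi_Y(T)$ compact and legitimizes the packing step to follow.

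Next I would run a maximal-packing argument in $Y$. I choose points $p_1, \dots, p_l \in \varphi_Y(T)$ that are maximal among configurations for which the balls $B_Y(p_i, a/2)$ are pairwise disjoint. Disjointness together with uniform distribution gives $l \cdot v_Y(a/2) \le \mu_Y(Y) = 1$, hence $l \le 1/v_Y(a/2)$, while maximality forces the enlarged balls to cover, $\varphi_Y(T) \subseteq \bigcup_{i=1}^l B_Y(p_i, a)$. Writing $p_i = \varphi_Y(t_i)$ with $t_i \in T$, I would then transfer this covering into $X$: whenever $s \in T$ satisfies $\dist_Y(\varphi_Y(s), p_i) \le a$, the closeness estimate on $T$ yields $\dist_X(\varphi_X(s), \varphi_X(t_i)) < a + c$, so that $\varphi_X(T) \subseteq \bigcup_{i=1}^l B_X(\varphi_X(t_i), a+c)$.

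Finally I would compare measures on the $X$-side. Because $(\varphi_X)_\ast \mathcal{L} = \mu_X$ and $T \subseteq \varphi_X^{-1}(\varphi_X(T))$, I obtain $\mu_X(\varphi_X(T)) \ge \mathcal{L}(T) > 1 - c$; on the other hand the covering together with uniform distribution gives $\mu_X(\varphi_X(T)) \le l \cdot v_X(a+c)$. Combining $l \le 1/v_Y(a/2)$ with the hypothesis $v_X(a+c) \le (1-c)\, v_Y(a/2)$ then produces the chain $1 - c < \mu_X(\varphi_X(T)) \le l \cdot v_X(a+c) \le v_X(a+c)/v_Y(a/2) \le 1 - c$, the desired contradiction, whence $\sikakuu(X,Y) \ge c$.

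The step I expect to demand the most care is the reduction from the raw definition of the box distance, which only supplies a measurable set $T_0$, to a compact set $T$ on which both parameters are simultaneously continuous: the compactness of $\varphi_Y(T)$ and the very existence of a maximal disjoint packing hinge on it. The accompanying measure bookkeeping — checking that restricting to $T$ and pushing forward the Lebesgue measure keeps the mass above $1 - c$ at each stage — is the other point requiring attention, though it is routine.
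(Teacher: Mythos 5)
Your proposal is correct and follows essentially the same route as the paper: contradiction, a maximal $a/2$-packing of $\varphi_Y(T)$ bounding $l$ by $1/v_Y(a/2)$, transfer of the resulting $a$-covering to an $(a+c)$-covering of $\varphi_X(T)$ via the closeness estimate on $T$, and the measure comparison $1-c < \mu_X(\varphi_X(T)) \leq l\cdot v_X(a+c) \leq v_X(a+c)/v_Y(a/2) \leq 1-c$. Your explicit invocation of Lusin's theorem fills in a step the paper only asserts (the existence of a compact $T$ on which both parameters are continuous), and it is indeed what guarantees that $\varphi_X(T)$ is Borel so that $\mu_X(\varphi_X(T)) \geq \mathcal{L}(T)$ makes sense.
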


	  For a compact Riemannian manifold $M$, we denote by $\vol(M)$ the
	  total Riemannian volume of $M$. We indicate by $\Gamma$ the Gamma function. 
	  \begin{lem}\label{hyouka}Let $M$ $($respectively, $N$$)$ be an $m$
	   $($respectively, $n$$)$-dimensional compact Riemannian
	   manifold having the uniformly distributed Riemannian volume
	   measure. Assume that $\ric_M \geq (m-1)\kappa_1 >0$ and
	   $\ric_N >0$, and put $a_N := \vol (N)/
	   \vol ({\mathbb{S}^n} )$. If a positive number $c$ with $c<1$
	   satisfies 
	   \begin{align*}
	    c^{n-m}\leq (1-c)    \frac{n a_N
	    (\kappa_1)^{m/2}\Gamma\big(\frac{m+1}{2} \big) \Gamma
	    \big( \frac{n}{2} \big)
	    }{m 2^{n+1}{\pi}^{m-1} \Gamma \big( \frac{m}{2}   \big)
	    \Gamma \big( \frac{n+1}{2}    \big)}  \text{ and } c
	    \sqrt{\kappa_1}\leq \pi, 
	    \end{align*}then we have $\sikakuu (M,N)\geq c$.
	   \begin{proof}From the Bishop-Gromov volume comparison
	    theorem, we get 
	    \begin{align*}
	     v_M (c/2)\geq v_{\mathbb{S}^m}\big((c\sqrt{\kappa_1})/2
	     \big)= \frac{\vol (\mathbb{S}^{m-1})}{\vol
	     (\mathbb{S}^m)}\int_{0}^{(c \sqrt{\kappa_1})/2}\sin^{m-1}
	     \theta d\theta.
	     \end{align*}From $c\sqrt{\kappa_1}\leq \pi$, we have $\sin
	    \theta \geq (\pi \theta) /2$ for any $\theta \in
	    [0,(c\sqrt{\kappa_1})/2] $. Hence, we obtain
	    \begin{align*}
	    v_M(c/2)\geq
	     \frac{2^{m-1}\vol (\mathbb{S}^{m-1})}{\pi^{m-1}\vol(\mathbb{S}^m)}
	     \int_0^{(c\sqrt{\kappa_1})/2}\theta^{m-1} d \theta =
	     \frac{c^m (\kappa_1)^{m/2}\vol (\mathbb{S}^{m-1})}{2m
	     \pi^{m-1} \vol (\mathbb{S}^m)}
	     \end{align*}
	    Let $\kappa_2 $ be a positive number such that $\ric_N \geq (n-1)\kappa_2$. We also obtain from
	    the Bishop inequality that 
	    \begin{align*}
	    v_N (2c) \leq \frac{v_{\mathbb{S}^n}(2c
	     \sqrt{\kappa_2})}{a_N (\kappa_2)^{n/2}}
	     =\frac{\vol (\mathbb{S}^{n-1})}{a_N 
	     (\kappa_2)^{n/2}\vol (\mathbb{S}^n)} \int^{2c\sqrt{\kappa_2}}_{0} \sin^{n-1}
	     \theta d\theta < \frac{(2c)^{n}\vol (\mathbb{S}^{n-1})}{na_N
	     \vol (\mathbb{S}^n)}. 
	     \end{align*}Recall that $\vol(\mathbb{S}^n)= 2 {\pi}^{(n+1)/2}/\Gamma
	   \big((n+1)/2\big)$. Therefore, combining above caluculations with
	    Lemma \ref{ballnomeasure}, we complete the proof.
	    \end{proof}
	   \end{lem}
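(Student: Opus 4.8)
The plan is to reduce everything to Lemma \ref{ballnomeasure}. Since $\sikakuu$ is symmetric, it suffices to bound $\sikakuu(N,M)$ from below, so I would apply Lemma \ref{ballnomeasure} with the roles $(X,Y)=(N,M)$ and with the choice $a=c$. Then the hypothesis required by that lemma becomes $v_N(c+c)\leq(1-c)v_M(c/2)$, i.e.\ $v_N(2c)\leq(1-c)v_M(c/2)$. Thus the whole proof splits into producing a good \emph{lower} bound for $v_M(c/2)$ and a good \emph{upper} bound for $v_N(2c)$, and then checking that the stated numerical hypothesis on $c$ is exactly the assertion $v_N(2c)\leq(1-c)v_M(c/2)$.

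For the lower bound on $v_M(c/2)$ I would use Bishop--Gromov volume comparison in its monotone (relative) form: under $\ric_M\geq(m-1)\kappa_1$ the ratio of the volume of a metric ball in $M$ to that of the corresponding ball in the model sphere $\mathbb{S}^m(1/\sqrt{\kappa_1})$ is nonincreasing in the radius. Evaluating this at the radius equal to $\diam M$, where the $M$-ball fills all of $M$, and invoking the Bonnet--Myers bound $\diam M\leq \pi/\sqrt{\kappa_1}$ (so the model ball there is contained in the whole model sphere), turns the monotonicity into the \emph{lower} bound $v_M(r)\geq v_{\mathbb{S}^m}(r\sqrt{\kappa_1})$ for the normalized ball-measure, where $v_{\mathbb{S}^m}$ refers to the unit sphere. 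Taking $r=c/2$ and writing $v_{\mathbb{S}^m}(\rho)=\big(\vol(\mathbb{S}^{m-1})/\vol(\mathbb{S}^m)\big)\int_0^\rho\sin^{m-1}\theta\,d\theta$, I would then apply Jordan's inequality $\sin\theta\geq 2\theta/\pi$, which is valid on $[0,\pi/2]$ and hence on $[0,(c\sqrt{\kappa_1})/2]$ precisely because the hypothesis $c\sqrt{\kappa_1}\leq\pi$ keeps the upper limit at most $\pi/2$. Integrating $\theta^{m-1}$ gives the explicit lower bound displayed in the lemma.

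For the upper bound on $v_N(2c)$ I would first fix $\kappa_2>0$ with $\ric_N\geq(n-1)\kappa_2$, which exists since $N$ is compact and $\ric_N>0$, and apply the Bishop inequality together with the definition $a_N=\vol(N)/\vol(\mathbb{S}^n)$ to get $v_N(2c)\leq v_{\mathbb{S}^n}(2c\sqrt{\kappa_2})/\big(a_N\,\kappa_2^{n/2}\big)$; using $\sin\theta\leq\theta$ in the integral defining $v_{\mathbb{S}^n}$ yields a bound in which the factor $\kappa_2^{n/2}$ cancels, leaving the $\kappa_2$-free estimate $v_N(2c)<(2c)^n\vol(\mathbb{S}^{n-1})/\big(n\,a_N\,\vol(\mathbb{S}^n)\big)$. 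Combining the two estimates, the inequality $v_N(2c)\leq(1-c)v_M(c/2)$ reduces to a comparison of the two explicit expressions, and substituting the closed form $\vol(\mathbb{S}^k)=2\pi^{(k+1)/2}/\Gamma\big((k+1)/2\big)$ for $k=m-1,m,n-1,n$ collapses the sphere-volume quotient to $\Gamma\big(\frac{m+1}{2}\big)\Gamma\big(\frac{n}{2}\big)/\big(\Gamma\big(\frac{m}{2}\big)\Gamma\big(\frac{n+1}{2}\big)\big)$, the powers of $\pi$ cancelling exactly; the resulting condition is precisely the stated hypothesis. I expect the only genuinely delicate point to be orienting Bishop--Gromov so that it produces a \emph{lower} bound on the normalized ball-measure of $M$ (the raw Bishop--Gromov inequality gives an upper bound, so the monotone-ratio form plus the diameter bound is essential), while the remaining steps are the Jordan-inequality estimate, the cancellation of $\kappa_2$, and the Gamma-function bookkeeping, all of which are routine.
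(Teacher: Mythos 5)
Your proposal is correct and follows essentially the same route as the paper: the same application of Lemma \ref{ballnomeasure} with $(X,Y)=(N,M)$ and $a=c$, the same Bishop--Gromov lower bound for $v_M(c/2)$ via Jordan's inequality (which you state correctly as $\sin\theta\geq 2\theta/\pi$, fixing a typo in the paper's ``$\sin\theta\geq(\pi\theta)/2$''), and the same Bishop upper bound for $v_N(2c)$ with the auxiliary $\kappa_2$ cancelling, followed by the identical Gamma-function bookkeeping. Your extra care in deriving the lower bound from the monotone-ratio form of Bishop--Gromov together with Myers' diameter bound merely makes explicit a step the paper leaves implicit.
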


	   \begin{proof}[Proof of Proposition \ref{kaotan}]Without loss
	    of generality, it may be assumed that $n_k \geq m_k$. 
	    
	    First, we consider the case of
	    $\{\mathbb{S}^n\}_{n=1}^{\infty}$. From the assumption, we
	    have $c^{n_k-m_k} \leq c^{C_3 k}$ for any
	    $0<c<1$. Substituting $n:=n_k$ and $m:=m_k$, we estimate the
	    right-hand side of the inequality of Lemma
	    \ref{hyouka} by 
	    \begin{align*}
	     (1-c)\frac{n_k \Gamma\big(  \frac{m_k+1}{2}    \big) \Gamma
	     \big(  \frac{n_k}{2}   \big)}{m_k 2^{n_k} \pi^{m_k-1}\Gamma
	     \big(   \frac{m_k}{2}   \big) \Gamma \big(
	     \frac{n_k+1}{2}\big)}\geq (1-c)2^{-C_1 k}\pi^{-C_2 k+1
	     }\frac{n_k \Gamma\big(  \frac{n_k}{2}     \big)}{m_k \Gamma
	     \big(  \frac{n_k +1}{2}         \big)}.
	     \end{align*}Therefore, if 
	    \begin{align*}
	    c \leq  \Big\{  (1-c)   \frac{n_k\Gamma \big( \frac{n_k}{2}
	     \big)}{m_k \Gamma \big(   \frac{n_k +1}{2}    \big)}
	     \Big\}^{\frac{1}{C_3
	     k}}2^{-\frac{C_1}{C_3}}\pi^{-\frac{C_2}{C_3}+\frac{1}{C_3k}},
	     \end{align*}then we obtain from Lemma \ref{hyouka} that
	    $\sikakuu (\mathbb{S}^{n_k}, \mathbb{S}^{m_k})\geq c$. Since
	    \begin{align*}
	     \Big\{  (1-c)   \frac{n_k\Gamma \big( \frac{n_k}{2}
	     \big)}{m_k \Gamma \big(   \frac{n_k +1}{2}
	    \big)}\Big\}^{\frac{1}{C_3 k}}\to 1\  \text{as }k\to \infty,
	     \end{align*}we have completed the proof
	    for $\{  \mathbb{S}^n   \}_{n=1}^{\infty}$.

	    Next, we consider $\{ \mathbb{C}P^n
	    \}_{n=1}^{\infty} $. It is well-known that
	    $\vol(\mathbb{C}P^n)=\pi^n /n! $ 
	    and the sectional curvature of
	    $\mathbb{C}P^n$ is bounded from below by $1$ (cf.~
	    \cite[Section 3.D.2, 3.H.3]{Gallot}). Hence, we get
	    \begin{align*}
	     a_{\mathbb{C}P^n} = \frac{\Gamma
	     \big(n+\frac{1}{2}\big)}{2\sqrt{\pi} n!}.
	     \end{align*}For any $0<c<1$, we have $c^{2n_k-2m_k}\leq
	    c^{2C_3 k}$. Substituting $n:=2n_k$ and $m:=2m_k$, we calculate
	    the right-hand side of the inequality of Lemma
	    \ref{hyouka} by
	    \begin{align*}
	     (1-c) \frac{\Gamma\big( m_k + \frac{1}{2}
	     \big)}{2\sqrt{\pi}m_k 2^{2 n_k +1} \pi^{2 m_k -1} \Gamma
	     (m_k)}\geq (1-c)\frac{1}{2\sqrt{\pi} C_2 k}\cdot 2^{-2C_1
	     k-1} \pi^{-2C_2 k +1}. 
	     \end{align*}So, if
	    \begin{align*}
	     c\leq \Big\{ (1-c)\frac{1}{2\sqrt{\pi} C_2 k}
	     \Big\}^{\frac{1}{2C_3 k}}2^{-\frac{C_1}{C_3}-\frac{1}{2 C_3
	     k}}\pi^{-\frac{C_2}{C_3} + \frac{1}{2 C_3 k}},
	     \end{align*}then we get by using Lemma \ref{hyouka} that
	    $\sikakuu(\mathbb{C}P^{n_k}, \mathbb{C}P^{m_k})\geq
	    c$. Since
	    \begin{align*}
	     \Big\{ (1-c)\frac{1}{2\sqrt{\pi} C_2 k}
	     \Big\}^{\frac{1}{2C_3 k}} \to 1 \text{ as } k\to \infty,
	     \end{align*}we complete the proof of the proposition. 
	    \end{proof}

Note that $\diam (\mathcal{X}_1, \sikakuu)=1$, where $\mathcal{X}_1$ is
the space of all mm-spaces with Borel probability measures.
	   \begin{lem}[{J. Christensen, c.f.~\cite[Section
	    $3.3$]{mattila}}]\label{christ}Let $X$ be a metric space and $\mu, \nu$
	    are uniformly distributed Borel measures on $X$. Then, there
	    exists a positive number $c>0$ such that $\mu = c \nu$.
	    \end{lem}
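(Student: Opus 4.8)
The plan is to show that the ratio of the two measures on small concentric balls tends to a single constant $c$, which then serves as the global density. Since $\mu,\nu$ are uniformly distributed, the functions $f(r):=\mu(B_X(x,r))$ and $g(r):=\nu(B_X(x,r))$ are well defined (independent of the center $x$), positive, and finite for every $r>0$; in particular both measures are nonzero. After restricting to the common support I may assume $X$ is separable. The engine of the argument is a Fubini computation: for every bounded open set $U$, which has finite $\mu$- and $\nu$-measure since it lies in a ball,
\[ f(r)\,\nu(U)=\int_U \mu\bigl(B_X(x,r)\bigr)\,d\nu(x)=\int_X \nu\bigl(U\cap B_X(y,r)\bigr)\,d\mu(y). \]

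First I would sandwich the right-hand integral. Writing $U_r:=\{y\in U:\dist(y,X\setminus U)>r\}$ and $U^{(r)}:=\{y:\dist(y,U)\le r\}$, one checks that $\nu(U\cap B_X(y,r))=g(r)$ for $y\in U_r$ and $\nu(U\cap B_X(y,r))=0$ for $y\notin U^{(r)}$, while $0\le\nu(U\cap B_X(y,r))\le g(r)$ in general. Hence $g(r)\,\mu(U_r)\le f(r)\,\nu(U)\le g(r)\,\mu(U^{(r)})$, and dividing by $g(r)>0$ and letting $r\to0$ (so that $\mu(U_r)\uparrow\mu(U)$ and $\mu(U^{(r)})\downarrow\mu(\overline U)$) yields
\[ \mu(U)\le \underline c\,\nu(U)\qquad\text{and}\qquad \bar c\,\nu(U)\le \mu(\overline U), \]
where $\underline c:=\liminf_{r\to0}f(r)/g(r)$ and $\bar c:=\limsup_{r\to0}f(r)/g(r)$ are constants that do not depend on $U$.

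Finally I would upgrade these one-sided inequalities to the equality $\mu=c\nu$. From $\mu(U)\le\underline c\,\nu(U)$ for all bounded open $U$ together with the outer regularity of finite Borel measures on a metric space, I get $\mu\le\underline c\,\nu$ as measures. Applying both inequalities to an open ball $B_X^{\circ}(x,s)$ and to the concentric closed ball $B_X(x,s)$, and using that the monotone functions $f,g$ are continuous off a countable set, gives $\bar c\,g(s)\le f(s)\le\underline c\,g(s)$ at every continuity radius; therefore $\underline c=\bar c=:c>0$ and $\mu(B_X(x,s))=c\,\nu(B_X(x,s))$ for all $x,s$. Then $c\nu-\mu$ is a non-negative Borel measure that vanishes on every ball, and since in a separable metric space each open set is a countable union of (open, hence increasing unions of closed) balls, it vanishes on all open and thus all Borel sets, giving $\mu=c\nu$. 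The hard part will be the treatment of the boundary terms—the discrepancy between $U_r$ and $U^{(r)}$, equivalently between open and closed balls—which is precisely what forces the regularity argument and the continuity-point bookkeeping; by contrast the Fubini identity and the sandwich estimate are routine.
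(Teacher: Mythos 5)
The paper itself contains no proof of this lemma: it is quoted as Christensen's theorem with a citation to \cite[Section 3.3]{mattila}, so the only benchmark is the standard proof there. Your argument runs on the same engine as that proof, namely the symmetrized Fubini identity $f(r)\,\nu(U)=\int_U \mu\big(B_X(x,r)\big)\,d\nu(x)=\int_X \nu\big(U\cap B_X(y,r)\big)\,d\mu(y)$, but you close the loop differently. Christensen--Mattila applies Fatou's lemma to $\nu\big(U\cap B_X(x,r)\big)/g(r)\to 1$ on the open set $U$ to get $\mu(U)\le \liminf_{r\to 0}\big(f(r)/g(r)\big)\,\nu(U)$, the symmetric inequality with $\mu$ and $\nu$ exchanged, and then the elementary fact $\liminf (f/g)\cdot\liminf (g/f)\le 1$ to squeeze out $\mu(U)=c\,\nu(U)$ for open $U$ directly, with no boundary bookkeeping at all. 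Your inner/outer sandwich via $U_r$ and $U^{(r)}$, followed by the continuity-radius argument forcing $\underline{c}=\bar{c}$, is a correct alternative: the sandwich inequalities are right, $\mu(U_r)\uparrow\mu(U)$ and $\mu(U^{(r)})\downarrow\mu(\overline{U})$ are legitimate (the latter because $U$ is bounded, so $\mu(U^{(r)})<\infty$), the upgrade to $\mu\le\underline{c}\,\nu$ by outer regularity works after localizing to a ball where both measures are finite, positivity and finiteness of $\underline{c},\bar{c}$ follow from testing on a ball, $f\equiv cg$ extends from continuity radii to all radii by right-continuity, and countable subadditivity does kill the nonnegative measure $c\nu-\mu$ on countable unions of balls. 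Your route costs exactly the monotone-function bookkeeping you anticipated, and in exchange avoids Fatou and the liminf-product trick; both are legitimate.

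The one genuine flaw is your separability step. ``Restricting to the common support'' is vacuous: uniform distribution forces every ball to have positive measure, so $\supp\mu=\supp\nu=X$ already and the restriction changes nothing; moreover, separability of the support of a Borel measure on a general metric space is not automatic, so the assertion as written has no justification. Separability \emph{does} hold here, but it needs its own (easy) packing argument, which you should supply: any $\delta$-separated subset $S$ of a ball $B_X(x,R)$ yields disjoint balls $B_X(s,\delta/2)$, $s\in S$, of common measure $f(\delta/2)>0$ inside $B_X(x,R+\delta)$, whence $\card (S)\le f(R+\delta)/f(\delta/2)<\infty$; thus bounded sets are totally bounded and $X=\bigcup_{n}B_X(x_0,n)$ is separable. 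This is not cosmetic: separability (together with $\sigma$-finiteness, immediate from the same exhaustion by balls) is what makes the closed set $\{(x,y) \mid \dist_X(x,y)\le r\}$ measurable for the product $\sigma$-algebra, so that your Fubini identity is licit, and it also underlies your final ``every open set is a countable union of balls'' covering. Note that the paper's statement of the lemma silently drops the separability hypothesis present in Mattila; the packing argument above shows the hypothesis is in fact redundant, which is presumably why. With that patch, your proof is complete.
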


	    \begin{proof}[Proof of Proposition \ref{azusa}]We identify $\mathbb{S}^{n-1}$ with $\{(x_1,
	     \cdots ,x_n,0)\in \mathbb{S}^{n} \mid (x_1, \cdots ,x_{n})\in \mathbb{S}^{n-1}
	      \}$. Given an arbitrary $\varepsilon >0$, since the sequence
	     $\{ \mathbb{S}^n\}_{n=1}^{\infty}$ is a L\'{e}vy family, we
	     have $r_n:= \mu_n\big(
	      (\mathbb{S}^{n-1})_{\varepsilon}\big) \to 1$ as $n\to
	     \infty$. Hence, there is $m\in \mathbb{N}$ such that $1-r_n
	     < \varepsilon$ for any $n\geq m$. Suppose that $n\geq
	     m$. Taking two parameters $\Phi_1 :[0,r_n] \to
	     (\mathbb{S}^{n-1})_{\varepsilon}$ and $\Phi_2:(r_n,1]\to
	     \mathbb{S}^n \setminus (\mathbb{S}^{n-1})_{\varepsilon}$,
	     we define a Borel measurable map $\Phi:[0,1]\to \mathbb{S}^n$ by 
\begin{align*}
\Phi(t):=
\left\{
\begin{array}{ll}
\Phi_{1}(t) &  \ \ t\in [0,r_n], \\
{\Phi}_{2}(t) &  \ \ t\in (r_n,1]. \\
\end{array}
\right.
\end{align*}The map $\Phi$ is a parameter of $\mathbb{S}^n$. Let $\psi :\mathbb{S}^{n}\setminus \{(0,\cdots,0,1),(0, \cdots,0
	     ,-1)\}\to \mathbb{S}^{n-1}$ be the projection, that is,
	     $\psi(x)$ is the unique element of $\mathbb{S}^{n-1}$
	     satisfying $\dist_n\big(x,\psi(x)\big)= \dist_n(x,
	     \mathbb{S}^{n-1})$. Put $\varphi_1:= \psi \circ \Phi_1 :[0,r_n]\to
	     \mathbb{S}^{n-1}$. 
	     \begin{claim}$\varphi_{1 \ast}(\mathcal{L}) = r_n
	      \mu_{n-1}$.
	      \begin{proof}Take any Borel subset $A\subseteq
	       \mathbb{S}^{n-1}$. For any $g\in SO(n-1)$, we have
	       \begin{align*}
		\varphi_{1 \ast}(\mathcal{L})(gA)=r_n \mu_n
		\big(\psi^{-1}(gA) \big)=r_n \mu_n
		\big(\psi^{-1}(A) \big)= \varphi_{1 \ast}(\mathcal{L})(A).
		\end{align*}Hence, $\varphi_{1 \ast}(\mathcal{L})$
	       is a $SO(n-1)$-invariant Borel measure. From Lemma
	       \ref{christ}, we complete the proof of the claim. 
	       \end{proof}
	      \end{claim}Taking a parameter $\phi:(0,1]\to \mathbb{S}^{n-1}$
	     of $\mathbb{S}^{n-1}$, we define a Borel measurable map
	     $\varphi_2: (r_n,1]\to \mathbb{S}^{n-1}$ by
	     $\varphi_2(t):=\phi\big( (t-r_n)/(1-r_n)\big)$. Then, we
	     have $\varphi_{2 \ast}(\mathcal{L})=
	     (1-r_n)\mathcal{L}$. Therefore, defining a Borel measurable
	     map $\varphi:[0,1]\to \mathbb{S}^{n-1} $ by 
\begin{align*}
\varphi(t):=
\left\{
\begin{array}{ll}
\varphi_{1}(t) &  \ \ t\in [0,r_n], \\
\varphi_{2}(t) &  \ \ t\in (r_n,1], \\
\end{array}
\right.
\end{align*}we see that the map $\varphi$ is a parameter of
	     $\mathbb{S}^{n-1}$. Since
	     \begin{align*}&|\dist_n\big(\Phi (s), \Phi(t) \big) -
	      \dist_{n-1}\big( \varphi(s), \varphi(t)      \big)|\\=
	      \ & |\dist_n\big(\Phi_1(s), \Phi_1(t)\big)
	      -\dist_{n-1}\big( \varphi_1(s), \varphi_1(t)\big)|\leq
	      2\varepsilon
	      \end{align*}for any $s,t \in [0,r_n]$, we get
	     \begin{align*}
	      \sikakuu (\mathbb{S}^n, \mathbb{S}^{n-1})\leq \square_1
	      (\Phi^{\ast} \dist_n , \varphi^{\ast} \dist_{n-1})\leq
	      \max \{ 2\varepsilon, 1-r_n       \}=2\varepsilon.
	      \end{align*}Consequently, we obtain $\sikakuu
	     (\mathbb{S}^n, \mathbb{S}^{n-1})\to 0$ as $n\to
	     \infty$. A similar argument shows that $\sikakuu
	     (\mathbb{C}P^n , \mathbb{C}P^{n-1})\to 0$ as $n\to
	     \infty$. This completes the proof of Proposition \ref{azusa}.
	     \end{proof}

	 \begin{lem}\label{asobisugi}For any $n,m \in \mathbb{N}$, we have
	  \begin{align*}
	   \sikakuu \big(  SO(n), SO(m)   \big) \geq c(n,m):= \min \Big\{
	   \frac{1}{2},       |\diam SO(n) -\diam SO(m)| \Big\}.
	   \end{align*}
	   \begin{proof}Suppose that $n>m$ and $\sikakuu \big(  SO(n),
	    SO(m)   \big) < c(n,m)$. There exist compact subset $T
	    \subseteq [0,1]$ and two parameters $\varphi_n:[0,1]\to
	    SO(n)$, $\varphi_m:[0,1]\to SO(m)$ such that
	    \begin{itemize}
	     \item[$(1)$]$\mathcal{L}(T)> 1-c(n,m)\geq 1/2$, 
	     \item[$(2)$] $\varphi_n |_T
	    :T \to SO(n)$, $\varphi_m |_{T} :T\to SO(m)$ are
	    continuous,
	     \item[$(3)$] $|\dist_{n}\big( \varphi_n(s) ,
	    \varphi_n(t)   - \dist_{m} \big(  \varphi_m(s),
	    \varphi_m(t)          \big)            |< c(n,m)$ for any $s, t
	    \in T$.
	     \end{itemize} 
	    \begin{claim}\label{peach}There exist $s_0, t_0\in T$ such that
	     $\dist_{n}\big( \varphi_n(s_0), \varphi_n(t_0)    \big)=
	     \diam SO(n)$.
	     \begin{proof}
	      Take $A_0, B_0 \in SO(n)$ such that $\diam SO(n)=
	      \dist_{n} (A_0, B_0)$ and define a map $\psi :SO(n) \to
	      SO(n)$ by $\psi (A):= A A_0^{-1}B$. Then,
	      $\psi_{\ast} (\mu_n) =\mu_n$ and $\dist_{n}
	      \big(A, \psi(A)\big)= \diam SO(n)$ for any $A \in
	      SO(n)$. Suppose that $\dist_n \big( \varphi_n(s) ,
	      \varphi_n(t)         \big) < \diam SO(n)$ for any $s,t \in
	      T$. Then, we get $\psi \big(  \varphi_n(T)      \big) \cap
	      \varphi_n (T)= \emptyset$, which leads to
	      \begin{align*}
	       \mu_n   \big( \psi \big(  \varphi_n(T)          \big)
	       \cap \varphi_n (T ) \big)= \ &\mu_n \big( \psi \big(
	       \varphi_n(T)          \big)            \big) + \mu_n
	       \big(  \varphi_n (T)    \big)\\
	       = \ & \mu_n \big( \psi^{-1} \big(   \psi\big( \varphi_n
	       (T)\big) \big)  \big) + \mu_n \big(  \varphi_n(T)
	       \big)\\
	       \geq \ &2 \mu_n \big(  \varphi_n(T)      \big) > 1.
	       \end{align*}This is a contradiciton and thus we complete
	      the proof of the claim. 
	      \end{proof}
	     \end{claim}
	    By Claim \ref{peach}, we obtain
	    \begin{align*}
	     \diam SO(n)- \diam SO(m)\leq |\dist_n \big(  \varphi_n
	     (s_0),       \varphi_n(t_0)\big)    - \dist_m\big(  \varphi_m(s_0),      \varphi_m(t_0) \big) |<c(n,m),
	     \end{align*}which is a contradiciton. This completes
	    the proof of Lemma \ref{asobisugi}.
	   \end{proof}
	  \end{lem}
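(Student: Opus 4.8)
The plan is to argue by contradiction, exactly in the spirit of the proof of Lemma \ref{ballnomeasure}. Since both $\sikakuu$ and the quantity $c(n,m)$ are symmetric in the two arguments, I may assume at the outset that $\diam SO(n) \geq \diam SO(m)$. Suppose then that $\sikakuu\big(SO(n),SO(m)\big) < c(n,m)$. Unwinding the definition of $\square_1$, and invoking the same Lusin-type refinement used in Lemma \ref{ballnomeasure} to make the parameters continuous on a compact subset, I obtain parameters $\varphi_n:[0,1]\to SO(n)$, $\varphi_m:[0,1]\to SO(m)$ and a compact set $T\subseteq[0,1]$ with $\mathcal{L}(T) > 1-c(n,m)\geq 1/2$ such that $\varphi_n|_T,\varphi_m|_T$ are continuous and $|\dist_n(\varphi_n(s),\varphi_n(t))-\dist_m(\varphi_m(s),\varphi_m(t))| < c(n,m)$ for all $s,t\in T$. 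Because $\varphi_{n\ast}(\mathcal{L})=\mu_n$, the image $\varphi_n(T)$ is compact with $\mu_n(\varphi_n(T))\geq \mathcal{L}(T) > 1/2$.

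The heart of the matter is to show that the diameter of $SO(n)$ is realized by a pair of points already coming from $T$, i.e. that there exist $s_0,t_0\in T$ with $\dist_n(\varphi_n(s_0),\varphi_n(t_0))=\diam SO(n)$. Here I would use the homogeneity of $SO(n)$ together with the \emph{bi-invariance} of its metric, which holds both for the Riemannian distance and for the distance induced by the Hilbert--Schmidt norm. Picking $A_0,B_0$ that realize the diameter and setting $\psi(A):=AA_0^{-1}B_0$, right translation by $A_0^{-1}B_0$ is measure preserving and satisfies $\dist_n(A,\psi(A))=\dist_n(A_0,B_0)=\diam SO(n)$ for every $A$, by left-invariance. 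If no pair of points in $\varphi_n(T)$ realized the diameter, then $\psi(\varphi_n(T))$ and $\varphi_n(T)$ would be disjoint; but both have $\mu_n$-measure $> 1/2$ and $\psi$ preserves $\mu_n$, which is impossible. Hence the two sets intersect, and pulling back yields the desired $s_0,t_0\in T$.

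Granting this, the conclusion follows at once: since $\dist_m(\varphi_m(s_0),\varphi_m(t_0))\leq\diam SO(m)$, the near-agreement of the two distances on $T$ gives
\[
\diam SO(n)-\diam SO(m)\leq \dist_n(\varphi_n(s_0),\varphi_n(t_0))-\dist_m(\varphi_m(s_0),\varphi_m(t_0)) < c(n,m),
\]
contradicting $c(n,m)\leq |\diam SO(n)-\diam SO(m)|=\diam SO(n)-\diam SO(m)$. The main obstacle is the middle step: producing a single measure-preserving self-map that displaces \emph{every} point by exactly the diameter. This is precisely where the group structure and bi-invariance enter, and it is the feature an arbitrary homogeneous mm-space need not provide; once it is available, the remainder is a routine measure count resting on the standard fact that the box distance can be witnessed on a set where the parameters are continuous.
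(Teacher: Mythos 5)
Your proposal is correct and follows essentially the same route as the paper: the same contradiction setup with a compact set $T$ of measure greater than $1/2$ on which the parameters are continuous, the same key claim that the diameter of $SO(n)$ is realized on $\varphi_n(T)$ via the right translation $\psi(A)=AA_0^{-1}B_0$ (which the paper writes with a typo as $AA_0^{-1}B$), and the same final comparison of diameters. Your explicit remarks on bi-invariance of the metric and on why $\mu_n(\varphi_n(T))\geq\mathcal{L}(T)$ only make precise what the paper leaves implicit.
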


	   \begin{proof}[Proof of Proposition \ref{oosawaakane}]An easy caluculations show that $2
	    \sqrt{n-1}\leq \diam SO(n) \leq 2\sqrt{n}$. Therefore,
	    supposing $n_k\geq m_k$, we
	    have
	    \begin{align*}
	     \diam SO(n_k)-\diam SO(m_k)\geq \ & 2\sqrt{n_k - 1} -
	     2\sqrt{m_k}\\= \ & 2\frac{n_k-
	     m_k-1}{\sqrt{n_k-1}+\sqrt{m_k} }\geq 2 \frac{C_3 -1/\sqrt{k}}{\sqrt{C_1-1/k}+ \sqrt{C_2}}.
	     \end{align*}Thus, applying Lemma \ref{asobisugi}, we complete
	    the proof.
	    \end{proof}

	\begin{ack}\upshape
	 The author would like to thank Professor Takashi Shioya and
	 Mr. Masayoshi Watanabe for
	 valuable discussions and many fruitful suggestions. He also
	 thanks Proffesor Hajime Urakawa for his encouragement.
	 \end{ack}

	\end{document}